\DeclareMathOperator*{\maximize}{maximize}
\DeclareMathOperator*{\minimize}{minimize}
\begin{document}

\newtheorem{theorem}{Theorem}
\newtheorem{cor}{Corollary}
\newtheorem{lem}{Lemma}
\newtheorem{problem}{Problem}
\newtheorem{assumption}{Assumption}

\newtheorem{remark}{Remark}

\newtheorem{dfn}{Definition}

\newcommand{\qh}{\hat{q}}
\newcommand{\be}{\begin{equation}}
\newcommand{\ee}{\end{equation}}
\newcommand{\bq}{\mathbf{q}}
\newcommand{\ba}{\mathbf{a}}
\newcommand{\bx}{\mathbf{x}}
\newcommand{\br}{\mathbf{r}}
\newcommand{\imh}{{i-\frac{1}{2}}}
\newcommand{\imth}{{i-\frac{3}{2}}}
\newcommand{\imfh}{{i-\frac{5}{2}}}
\newcommand{\iph}{{i+\frac{1}{2}}}
\newcommand{\ipth}{{i+\frac{3}{2}}}
\newcommand{\ipfh}{{i+\frac{5}{2}}}
\newcommand{\ipmh}{{i \pm \frac{1}{2}}}
\newcommand{\jph}{{j+\frac{1}{2}}}
\newcommand{\jmh}{{j-\frac{1}{2}}}
\newcommand{\Aop}{{\cal A}}
\newcommand{\Bop}{{\cal B}}
\newcommand{\Wop}{{\cal W}}
\newcommand{\Oop}{{\cal O}}
\newcommand{\DQ}{\Delta Q}
\newcommand{\Dq}{\Delta q}
\newcommand{\Dx}{\Delta x}
\newcommand{\Dy}{\Delta y}
\newcommand{\Du}{\Delta u}
\newcommand{\bu}{\mathbf{u}}
\newcommand{\bv}{\mathbf{v}}
\newcommand{\bg}{\mathbf{g}}
\newcommand{\bw}{\mathbf{w}}
\newcommand{\bU}{\mathbf{U}}
\newcommand{\bV}{\mathbf{V}}
\newcommand{\bF}{\mathbf{F}}
\newcommand{\Lop}{{\cal L}}
\newcommand{\Cop}{{\cal C}}
\newcommand{\Fop}{{\cal F}}
\newcommand{\Dofr}{{\cal D}(r)}
\newcommand{\Dt}{\Delta t}
\newcommand{\DtFE}{\Delta t_\textup{FE}}
\newcommand{\bbA}{\mathbf{A}}
\newcommand{\bbZ}{\mathbf{Z}}
\newcommand{\bbK}{\mathbf{K}}
\newcommand{\bM}{\mathbf{M}}
\newcommand{\bbI}{\mathbf{I}}
\newcommand{\bbb}{\mathbf{b}}
\newcommand{\bB}{\mathbf{B}}
\newcommand{\bR}{\mathbf{R}}
\newcommand{\bbe}{\mathbf{e}}
\newcommand{\bbone}{\mathbf{1}}

\newcommand{\dx}{\Delta x}
\newcommand{\dt}{\Delta t}
\newcommand{\hfp}{\hat{f}_{j+\half}}
\newcommand{\hfn}{\hat{f}_{j-\half}}
\newcommand{\aik}{\alpha_{i,k}}
\newcommand{\bik}{\beta_{i,k}}
\newcommand{\lt}{\tilde{L}}

\newcommand{\hf}{\frac{1}{2}}
\newcommand{\fracStrut}{\rule[-1.0ex]{0pt}{3.1ex}}
\newcommand{\hfs}{\ensuremath{\frac{1}{2}}\fracStrut}
\newcommand{\scinot}[2]{\ensuremath{#1\times10^{#2}}}
\newcommand{\dee}{\mathrm{d}}
\newcommand{\dye}{\partial}
\newcommand{\diff}[2]{\frac{\dee #1}{\dee #2}}
\newcommand{\pdiff}[2]{\frac{\dye #1}{\dye #2}}
\newcommand{\Real}{\mathbb{R}}
\newcommand{\Complex}{\mathbb{C}}
\newcommand{\m}[1]{\mathbf{#1}}
\newcommand{\mA}{\m{A}}
\newcommand{\mI}{\m{I}}
\newcommand{\mK}{\m{K}}
\newcommand{\mL}{\m{L}}
\newcommand{\mX}{\m{X}}
\newcommand{\matalpha}{\boldsymbol{\upalpha}}
\newcommand{\matgamma}{\boldsymbol{\upgamma}}
\newcommand{\matbeta}{\boldsymbol{\upbeta}}
\newcommand{\matmu}{\boldsymbol{\upmu}}
\newcommand{\matlambda}{\boldsymbol{\uplambda}}
\renewcommand{\v}[1]{\boldsymbol{#1}}
\newcommand{\transpose}{^\mathrm{T}}
\newcommand{\bT}{\v{b}\transpose}
\newcommand{\vb}{\v{b}}
\newcommand{\vc}{\v{c}}
\newcommand{\vd}{\v{d}}
\newcommand{\ve}{\v{e}}
\newcommand{\vu}{\v{u}}
\newcommand{\vv}{\v{v}}
\newcommand{\vy}{\v{y}}
\newcommand{\matlab}{{\sc Matlab}\xspace}
\newcommand{\cvx}{{\texttt{CVX}}\xspace}
\newcommand{\sedumi}{{\texttt{SeDuMi}}\xspace}
\newcommand{\sdpt}{{\texttt{SDPT3}}\xspace}
\newcommand{\code}[1]{\textsf{#1}}
\newcommand{\sspcoef}{\mathcal{C}}

\newcommand{\tL}{\tilde{L}}
\newcommand{\tF}{\tilde{F}}
\newcommand{\tR}{\tilde{R}}
\newcommand{\tr}{\tilde{r}}
\newcommand{\tz}{\tilde{z}}
\newcommand{\tbeta}{\tilde{\beta}}

\newcommand{\Hopt}{H_\textup{opt}}
\newcommand{\hmax}{h_\textup{max}}
\newcommand{\hmin}{h_\textup{min}}
\newcommand{\Ropt}{R_\textup{opt}}

\newcommand{\real}{\operatorname{Re}}
\newcommand{\imag}{\operatorname{Im}}

\title{Optimal stability polynomials for numerical integration of initial value problems}
\author{David I. Ketcheson\thanks{King Abdullah University of Science and Technology (KAUST),
    Division of Mathematical and Computer Sciences and Engineering,
    Thuwal, 23955-6900. Saudi Arabia  (\mbox{david.ketcheson@kaust.edu.sa}) } \and 
    Aron J. Ahmadia\thanks{King Abdullah University of Science and Technology (KAUST),
    Division of Mathematical and Computer Sciences and Engineering,
    Thuwal, 23955-6900. Saudi Arabia}}

\maketitle

\begin{abstract}

We consider the problem of finding optimally stable polynomial approximations
to the exponential for application to one-step integration of initial value
ordinary and partial differential equations.  The objective is to find
the largest stable step size and corresponding method for a given problem
when the spectrum of the initial value problem is known.
The problem is expressed in 
terms of a general least deviation feasibility problem.  Its solution is
obtained by a new fast, accurate, and robust algorithm based on convex
optimization techniques.  Global convergence of the algorithm is proven 
in the case that the order of approximation is one and in the case that
the spectrum encloses a starlike region.  Examples demonstrate the
effectiveness of the proposed algorithm even when these conditions are 
not satisfied.
\end{abstract}

\section{Stability of Runge--Kutta methods}
Runge--Kutta methods are among the most widely used types of numerical integrators
for solving initial value ordinary and partial differential equations.
The time step size should be taken as large as possible since the cost of
solving an initial value problem (IVP) up to a
fixed final time is proportional to the number of steps that must be taken.
In practical computation, the time step is often limited by 
stability and accuracy constraints.  Either accuracy, stability, or both may
be limiting factors for a given problem; see e.g. \cite[Section 7.5]{rjl:fdmbook}
for a discussion.  The linear stability and accuracy of an explicit Runge--Kutta
method are characterized completely by the so-called stability polynomial of the method, which in turn dictates
the acceptable step size \cite{Butcher_2008,Hairer_Wanner_1993}.
In this work we present an approach for constructing a stability polynomial
that allows the largest absolutely stable step size for a given problem.  

In the remainder of this section, we review the stability concepts for Runge--Kutta
methods and formulate the stability optimization problem.
Our optimization approach, described in Section \ref{sec:algorithm}, is based on
reformulating the stability optimization problem 
in terms of a sequence of convex subproblems and using bisection.
We examine the theoretical properties of the proposed algorithm and
prove its global convergence for two important cases.

A key element of our optimization algorithm is the use of numerical convex optimization
techniques.  We avoid a poorly conditioned numerical formulation by posing the problem
in terms of a polynomial basis that is
well-conditioned when sampled over a particular region of the complex plane.
These numerical considerations, which become particularly important when the
number of stages of the method is allowed to be very large, 
are discussed in Section \ref{sec:implementation}.

In Section \ref{sec:examples} we apply our algorithm to several examples of complex spectra.
Cases where optimal results are known provide verification of the algorithm,
and many new or improved results are provided.

Determination of the stability polynomial
is only half of the puzzle of designing optimal explicit Runge--Kutta methods.
The other half is the determination of the Butcher coefficients.
While simply finding methods
with a desired stability polynomial is straightforward, many additional challenges
arise in that context; for instance, additional nonlinear order conditions,
internal stability, storage, and embedded error estimators.
The development of full Runge--Kutta methods based on optimal stability
polynomials is the subject of ongoing work \cite{parsani2012}.


\subsection{The stability polynomial}
A linear, constant-coefficient initial value problem takes the form
\begin{align} \label{eq:ivp}
u'(t) & = Lu  & u(0) & = u_0,
\end{align}
where $u(t) : \Real\to\Real^N$ and $L \in\Real^{N\times N}$.  
When applied to the linear IVP \eqref{eq:ivp},
any Runge--Kutta method reduces to an iteration of the form
\begin{align} \label{eq:RKiter}
u_n = R(hL)u_{n-1},
\end{align}
where $h$ is the step size and $u_n$ is a numerical approximation to $u(nh)$.
The {\em stability function} $R(z)$ depends only on the coefficients of the
Runge--Kutta method \cite[Section 4.3]{Gottlieb2011a}\cite{Butcher_2008,Hairer_Wanner_1993}.  
In general, the stability function of an $s$-stage explicit Runge-Kutta method is
a polynomial of degree $s$ 
\begin{align} \label{eq:polyform}
R(z) & = \sum_{j=0}^s a_j z^j.
\end{align}
Recall that the exact solution of \eqref{eq:ivp} is $u(t) = \exp(tL)u_0$.  Thus,
if the method is accurate to order $p$, the stability polynomial must be
identical to the exponential function up to terms of at least order $p$:
\begin{align} \label{eq:oc}
a_j = \frac{1}{j!} \ \mbox{ for } \ \ \ \ \ 0\le j \le p.
\end{align}

\subsection{Absolute stability}
The stability polynomial governs the local
propagation of errors, since any perturbation to the solution will be multiplied
by $R(z)$ at each subsequent step.  The propagation of errors thus
depends on $\|R(hL)\|$, which leads us to define the {\em absolute stability region}
\begin{align}
S = \{ z \in \Complex : |R(z)|\le 1 \}.
\end{align}
For example, the stability region of the classical fourth-order method 
is shown in Figure \ref{fig:rk4}.

Given an initial value problem \eqref{eq:ivp}, let $\Lambda\in\Complex$
 denote the spectrum of the matrix $L$.
We say the iteration \eqref{eq:RKiter} is absolutely stable if 
\begin{align} \label{eq:spec-cond}
h\lambda \in S & \ \ \mbox{ for all } \ \ \lambda \in \Lambda.
\end{align}
Condition \eqref{eq:spec-cond} implies that
$u_n$ remains bounded for all $n$.  More importantly, 
\eqref{eq:spec-cond} is a necessary condition for stable
propagation of errors\footnote{For non-normal $L$, it may be important to consider 
the pseudospectrum rather than the spectrum; see Section \ref{sec:pseudospectrum}.}.
Thus the maximum stable step size is given by
\begin{align}
h_\textup{stable} = \max \{ h\ge0 : |R(h\lambda)|\le 1 \mbox{ for } \lambda\in\Lambda\}.
\end{align}

As an example, consider the advection equation 
$$\frac{\partial}{\partial t} u(x,t) + \frac{\partial}{\partial x} u(x,t) = 0, \ \ \ \ \ x\in(0,M),$$
discretized in space by first-order upwind differencing with spatial mesh size $\Dx$
$$U_i'(t) = - \frac{U_i(t)-U_{i-1}(t)}{\Dx} \ \ \ \ \ \ \ \ 0\le i \le N$$
with periodic boundary condition $U_0(t)=U_N(t)$.
This is a linear IVP \eqref{eq:ivp} with $L$ a circulant bidiagonal matrix.
The eigenvalues of $L$ 
are plotted in Figure \ref{fig:updiff} for $\Dx=1, N=M=20$.  
To integrate this system with the classical fourth-order Runge--Kutta method, 
the time step size must
be taken small enough that the scaled spectrum $\{h\lambda_i\}$
lies inside the stability region. Figure \ref{fig:rk4updiff} shows
the (maximally) scaled spectrum superimposed on the stability region.

The motivation for this work is that a larger stable step size can
be obtained by using a Runge--Kutta method with a larger region of absolute stability.
Figure \ref{fig:opt104updiff} shows the stability region of an optimized
ten-stage Runge--Kutta method of order four that allows a
much larger step size.  The ten-stage method was obtained using the technique that
is the focus of this work.
Since the cost of taking one step is typically proportional to the number of
stages $s$, we can compare the efficiency of methods with different numbers of
stages by considering the {\em effective step size} $h/s$. 
Normalizing in this manner, it turns out that the ten-stage method is nearly twice as fast
as the traditional four-stage method.

\begin{figure}
  \center
  \subfigure[\label{fig:updiff}]{\includegraphics[height=2.5in]{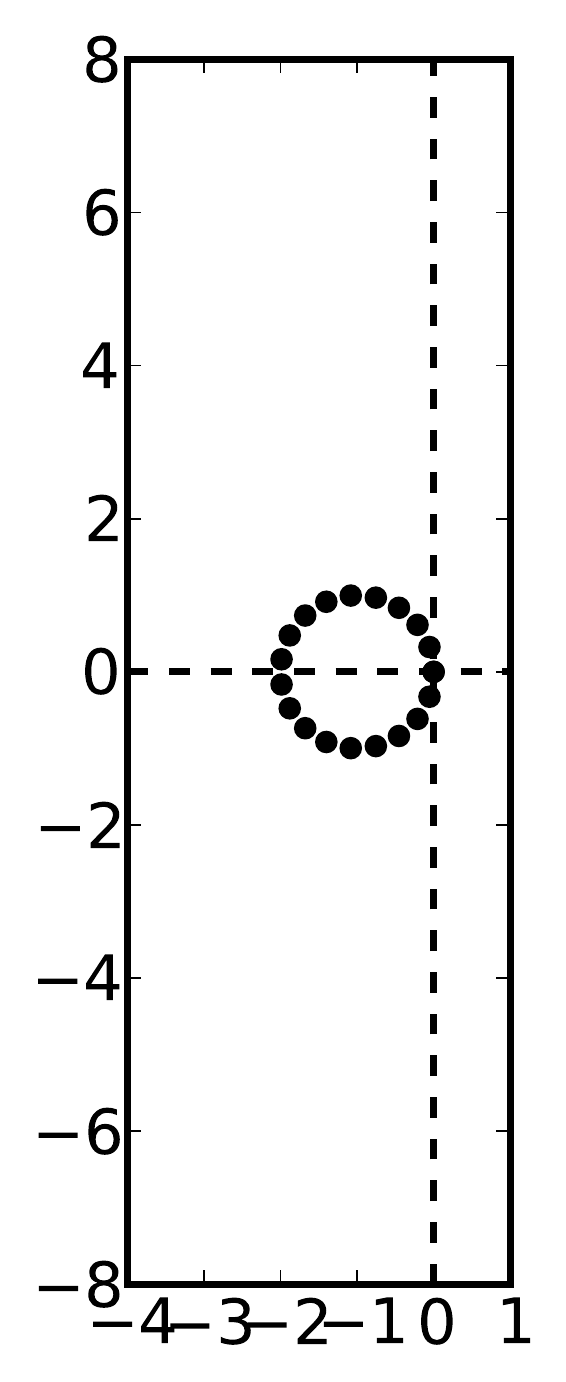}} 
  \subfigure[\label{fig:rk4}]{\includegraphics[height=2.5in]{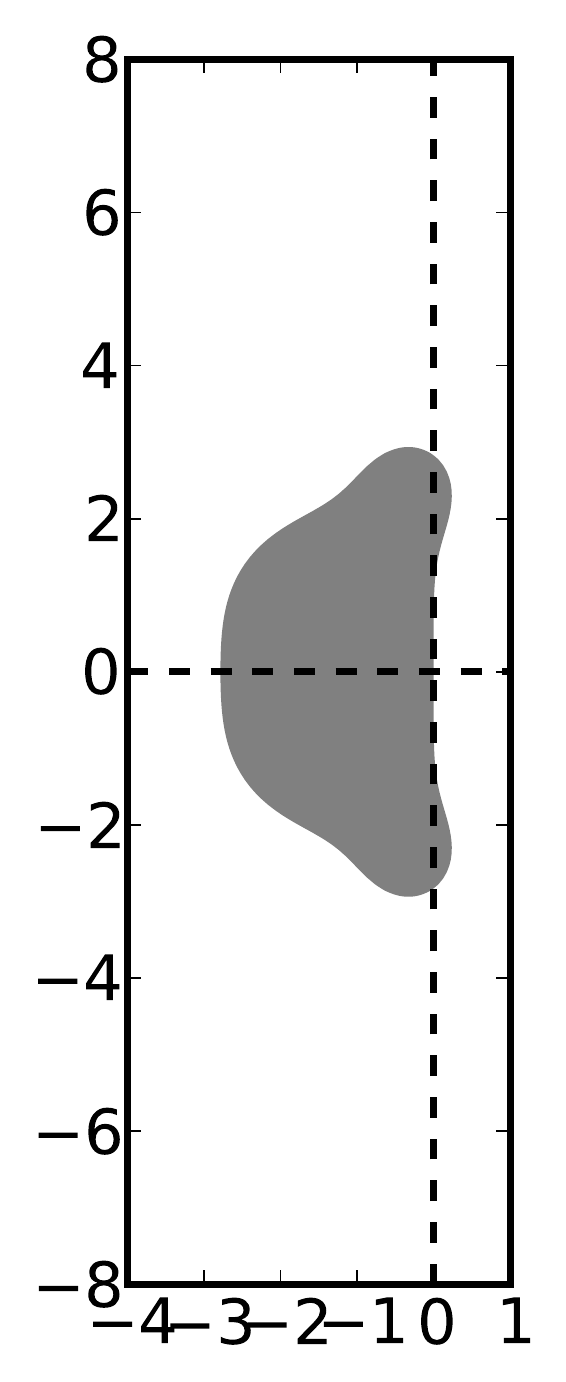}} 
  \subfigure[\label{fig:rk4updiff}]{\includegraphics[height=2.5in]{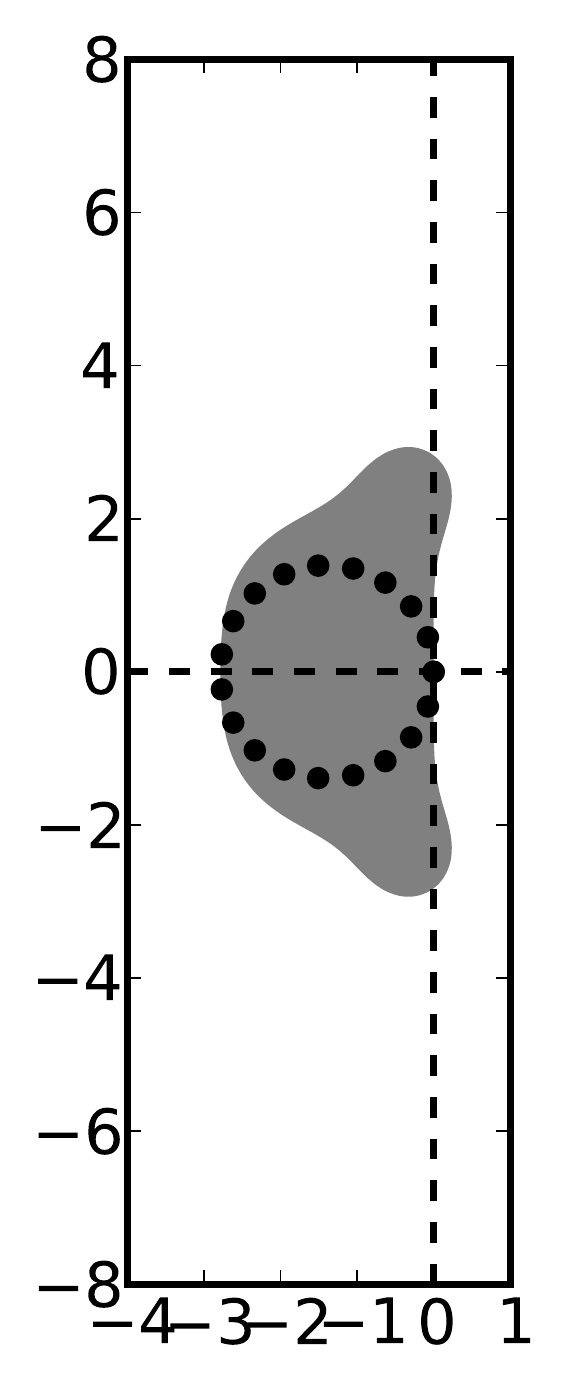}} 
  \subfigure[\label{fig:opt104updiff}]{\includegraphics[height=2.5in]{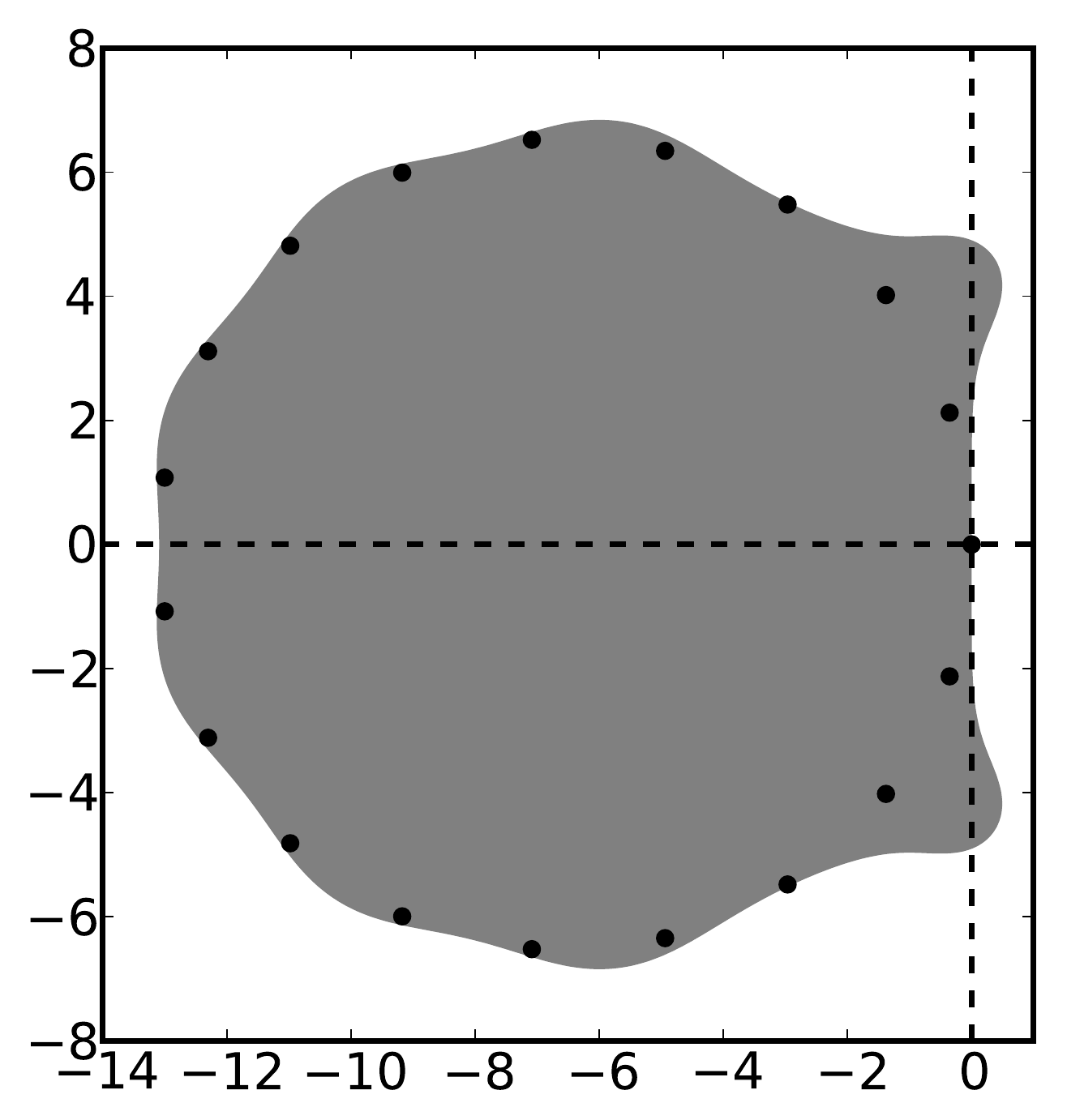}}
  \caption{(a) spectrum of first-order upwind difference matrix using $N=20$ points in space; (b) stability region of the classical fourth-order Runge--Kutta method; (c) Scaled spectrum $h\lambda$ with $h=1.39$; (d) Scaled spectrum $h\lambda$  for optimal 10-stage method with $h=6.54$.}
\end{figure}

\subsection{Design of optimal stability polynomials\label{sec:formulation}}
We now consider the problem of choosing a stability polynomial so as to maximize
the step size under which given stability constraints are satisfied.
The objective function $f(x)$ is simply the step size $h$.  The stability conditions
yield nonlinear inequality constraints.
Typically one also wishes to impose a minimal order of accuracy.
The monomial basis representation \eqref{eq:polyform} of $R(z)$ is then
convenient because the first $p+1$
coefficients $\{a_0,a_1,\dots,a_p\}$ of the stability polynomial are simply taken to
satisfy the order conditions \eqref{eq:oc}.
As a result, the space of decision variables has dimension $s+1-p$,
and is comprised of the coefficients $\{a_{p+1},a_{p+2},\dots,a_s\}$,
as well as the step size $h$.  
Then the problem can be written as
\begin{problem}[stability optimization] \label{prob:stab_opt}
  Given $\Lambda \subset \Complex$, order $p$, and number of stages $s$,
  \begin{equation*}
    \begin{aligned}
      & \maximize_{a_{p+1},a_{p+2},\dots,a_s,h} & & h \\
      & \textnormal{subject to} & & |R(h\lambda)| -1 \le 0, \ \ \ \ \ \forall \lambda\in\Lambda.
    \end{aligned}
  \end{equation*}
\end{problem}
We use $\Hopt$ to denote the solution of Problem \ref{prob:stab_opt} (the optimal
step size) and $\Ropt$ to denote the optimal polynomial.


The set $\Lambda$ may be finite, corresponding to a finite-dimensional ODE system
or PDE semi-discretization, or infinite (but bounded), corresponding to a PDE or perhaps its
semi-discretization in the limit of infinitesimal mesh width.
In the latter case, Problem \ref{prob:stab_opt} is a semi-infinite program (SIP).
In Section \ref{sec:examples} we approach this by using a finite discretization of
$\Lambda$; for a discussion of this and other approaches to
semi-infinite programming, see \cite{Hettich1993}.

\subsection{Previous work}
The problem of finding optimal stability polynomials 
is of fundamental importance in the numerical solution of
initial value problems, and its solution or approximation has been studied by
many authors for several decades
\cite{Lawson1966,Riha1972a,vanderHouwen1972,jeltsch1978largest,jeltsch1981,jeltsch1982stability,vichnevetsky1983,kinnmark1984a,kinnmark1984b,pike1985,ingemar1986,Renaut1990,vanderhouwen1996,Medovikov1998,Mead1999,abdulle2000roots,abdulle2001second,abdulle2002fourth,verwer2004rkc,Bogatyrev2005a,torrilhon2007essentially,Sommeijer2007,Bernardini2009,Allampalli2009,martin2009second,Niegemann2011,toulorge2011b,Skvortsov2011a}.
Indeed, it is closely related to the problem of finding polynomials of least
deviation, which goes back to the work of Chebyshev.  
A nice review of much of the early work on Runge--Kutta stability regions
can be found in \cite{vanderhouwen1996}.
The most-studied cases are those where the eigenvalues lie on the negative real
axis, on the imaginary axis, or in a disk of the form $|z+w|\le w$.
Many results and optimal polynomials, both
exact and numerical, are available for these cases.  Much less is available
regarding the solution of Problem \ref{prob:stab_opt} for arbitrary spectra $\lambda_i$.


Two very recent works serve to illustrate both the progress that has been
made in solving these problems with nonlinear programming, and the challenges
that remain.  In \cite{toulorge2011b}, optimal schemes are sought for integration
of discontinuous Galerkin discretizations of wave equations, where the optimality
criteria considered include both accuracy and stability measures.  
The approach used there is based on sequential quadratic programming (local optimization)
with many initial guesses.  The authors
consider methods of at most fourth order and situations with $s-p\le 4$ 
``because the cost of the optimization procedure becomes prohibitive for a
higher number of free parameters.''  In \cite{Niegemann2011}, optimally stable
polynomials are found for certain spectra of interest for $2\le p\le 4$ and (in
a remarkable feat!) $s$ as large as $14$.  The new methods obtained achieve
a 40-50\% improvement in efficiency for discontinuous Galerkin integration of
the 3D Maxwell equations.
The optimization approach employed therein is again a direct search algorithm 
that does not guarantee a globally optimal solution but 
``typically converges ... within a few minutes''.  However, it was apparently unable to 
find solutions for $s>14$ or $p>4$.  The method we present in the next section can rapidly find solutions
for significantly larger values of $s,p$, and is provably globally convergent under 
certain assumptions (introduced in section \ref{sec:algorithm}).

\section{An efficient algorithm for design of globally optimal stability
            polynomials\label{sec:algorithm}}
Evidently, finding the global solution of Problem \ref{prob:stab_opt} is
in general quite challenging.  
Although the Karush-Kuhn-Tucker (KKT) conditions provide
necessary conditions for optimality in the solution of nonlinear programming problems, the stability constraints in Problem \ref{prob:stab_opt}
are nonconvex, hence suboptimal local minima may exist.  

\subsection{Reformulation in terms of the least deviation problem\label{sec:reformulation}}
The primary theoretical advance leading to the new results in this paper
is a reformulation of Problem \ref{prob:stab_opt}.
Note that Problem \ref{prob:stab_opt} is (for $s>2$) nonconvex
since $R(h\lambda)$ is a nonconvex function in $h$.

Instead of asking for the maximum stable step size we now ask,
for a given step size $h$, how small the maximum modulus of $R(h\lambda)$ can be.
This leads to a generalization of the classical least deviation problem.
\begin{problem}[(Least Deviation)] \label{prob:unconstrained}
  Given $\Lambda\subset \Complex$, $h\in\Real^+$ and $p,s\in\mathbb{N}$
  \begin{equation*}
    \begin{aligned}
      & \minimize_{a_{p+1},a_{p+2},\dots,a_s} && \max_{\lambda\in\Lambda}\left(|R(h\lambda|-1\right). 
    \end{aligned}
  \end{equation*} 
\end{problem}
We denote the solution of Problem \ref{prob:unconstrained} by $r_{p,s}(h,\Lambda)$,
or simply $r(h,\Lambda)$.
Note that $|R(z)|$ is convex with respect to $a_j$, 
since $R(z)$ is linear in the $a_j$.  Therefore, Problem \ref{prob:unconstrained} is convex. 
Furthermore, Problem \ref{prob:stab_opt} can be formulated in terms of Problem \ref{prob:unconstrained}.
\begin{problem}[Reformulation of Problem \ref{prob:stab_opt}] \label{prob:sequence}
  Given $\Lambda\subset \Complex$, and $p,s\in\mathbb{N}$,
    \begin{equation*}
      \begin{aligned}
        & \maximize_{a_{p+1},a_{p+2},\dots,a_s} && h \\
        & \textnormal{subject to} && r_{p,s}(h,\Lambda) \le 0.
      \end{aligned}
    \end{equation*}
  \end{problem}

\subsection{Solution via bisection}
Although Problem \ref{prob:sequence} is not known to be convex, it is an
optimization in a single variable.  It is natural then to apply a bisection
approach, as outlined in Algorithm \ref{alg:bisection}.

\begin{algorithm}\caption{Simple bisection}
\label{alg:bisection}
\begin{algorithmic}

\State $\hmin = 0$
\While {$\hmax-\hmin>\epsilon$}
    \State $h = (\hmax+\hmin)/2$
    \State Solve Problem \ref{prob:unconstrained}
    \If {$r_{p,s}(h,\Lambda)\le0$}
        \State $\hmin = h$
    \Else
        \State $\hmax = h$
    \EndIf
\EndWhile \\
\Return $H_\epsilon = \hmin$
\end{algorithmic}
\end{algorithm}
As long as $\hmax$ is chosen large enough, it is clear that $r(h,\Lambda)=0$
for some $h\in[\hmin,\hmax]$.  Global convergence of the algorithm is assured
only if the following condition holds:
\begin{align} \label{h-monotone} 
r_{p,s}(h_0,\Lambda)=0 \implies r_{p,s}(h,\Lambda)\le 0 \text{ for all } 0\le h\le h_0.  
\end{align}
We now consider conditions under which condition \eqref{h-monotone} can be
established.  We have the following important case.

\begin{theorem}[Global convergence when $p=1$] \label{theorem-bisection}
    Let $p=1$, $\Lambda\subset\Complex$ and $s\ge 1$.  Take $\hmax$ large enough
    so that $r(\hmax,\Lambda)>0$.
    Let $\Hopt$ denote the solution
    of Problem \ref{prob:stab_opt}.  Then the output of Algorithm \ref{alg:bisection}
    satisfies $$\lim_{\epsilon\to0} H_\epsilon = \Hopt.$$
\end{theorem}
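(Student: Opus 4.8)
The whole statement reduces to verifying the monotonicity property \eqref{h-monotone}; once that is in hand, the convergence of Algorithm \ref{alg:bisection} is the routine bisection argument. So the plan is: (1) prove a slightly stronger form of \eqref{h-monotone}, namely that the set $\mathcal{F} := \{h\ge 0 : r_{1,s}(h,\Lambda)\le 0\}$ is downward closed (if $h_0\in\mathcal{F}$ and $0\le h\le h_0$ then $h\in\mathcal{F}$); and (2) read off convergence to $\Hopt=\sup\mathcal{F}$ from the usual invariant maintained by bisection.

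For step (1), the one nonroutine idea is an explicit rescaling of the polynomial. Suppose $h_0\in\mathcal{F}$, so there are coefficients $a_2,\dots,a_s$ for which $R(z)=1+z+\sum_{j=2}^s a_j z^j$ satisfies $|R(h_0\lambda)|\le 1$ for all $\lambda\in\Lambda$. Fix $h\in(0,h_0]$ and put $\theta=h/h_0\in(0,1]$. I would define
\[
  \tilde R(z) \;=\; (1-\theta) + \theta\,R\!\left(\tfrac{z}{\theta}\right),
\]
and then check two things. First, expanding gives $\tilde R(z)=1+z+\sum_{j=2}^s \theta^{1-j}a_j z^j$, a polynomial of degree at most $s$ whose constant and linear coefficients are both $1$; hence $\tilde R$ is admissible in Problem \ref{prob:unconstrained} for $p=1$ and $s$ stages (and satisfies \eqref{eq:oc} for $p=1$). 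Second, since $h/\theta=h_0$ we have $\tilde R(h\lambda)=(1-\theta)+\theta\,R(h_0\lambda)$, so the triangle inequality together with $1-\theta\ge 0$ yields $|\tilde R(h\lambda)|\le (1-\theta)+\theta\,|R(h_0\lambda)|\le(1-\theta)+\theta=1$ for every $\lambda\in\Lambda$. Therefore $r_{1,s}(h,\Lambda)\le 0$, i.e. $h\in\mathcal{F}$. The boundary case $h=0$ is trivial, since $R\equiv 1$ is admissible and $|R(0)|=1$. (This is precisely where $p=1$ matters: the rescaling fixes $a_0,a_1$ but multiplies $a_j$ by $\theta^{1-j}$ for $j\ge 2$, which would destroy the order conditions \eqref{eq:oc} as soon as $p\ge 2$.)

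For step (2): the $h$ feasible for Problem \ref{prob:stab_opt} are exactly those with $r_{1,s}(h,\Lambda)\le 0$, so $\Hopt=\sup\mathcal{F}$, and downward closedness gives $[0,\Hopt)\subseteq\mathcal{F}$; also $\Hopt\le\hmax<\infty$ because $r_{1,s}(\hmax,\Lambda)>0$. Algorithm \ref{alg:bisection} starts from $r_{1,s}(\hmin,\Lambda)=r_{1,s}(0,\Lambda)=0$ and $r_{1,s}(\hmax,\Lambda)>0$, and maintains $r_{1,s}(\hmin,\Lambda)\le 0$, $r_{1,s}(\hmax,\Lambda)>0$; hence $\hmin\in\mathcal{F}$ and $\hmax\notin\mathcal{F}$, which by the interval structure forces $\hmin\le\Hopt\le\hmax$ at every iteration. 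Each pass halves $\hmax-\hmin$, so at termination $0\le \Hopt-H_\epsilon=\Hopt-\hmin\le \hmax-\hmin\le\epsilon$, and letting $\epsilon\to 0$ gives $H_\epsilon\to\Hopt$.

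The main obstacle is simply finding the right transformation $\tilde R(z)=(1-\theta)+\theta R(z/\theta)$ — recognizing that feasibility at a larger step size can be ``transported'' down to a smaller one by composing a dilation with a convex combination toward the constant polynomial $1$, and that this operation respects first-order accuracy. The coefficient bookkeeping and the bisection bookkeeping are then straightforward.
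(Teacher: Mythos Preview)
Your proof is correct and is essentially the same as the paper's: both hinge on the rescaling $a_j\mapsto \theta^{1-j}a_j$ (equivalently, your formula $\tilde R(z)=(1-\theta)+\theta\,R(z/\theta)$), which preserves the $p=1$ order conditions and yields $\tilde R(h\lambda)=(1-\theta)+\theta\,R(h_0\lambda)$, whence $|\tilde R(h\lambda)|\le 1$ by convexity. Your write-up is slightly more explicit about the bisection bookkeeping, but the key transformation and the convex-combination bound are identical to the paper's argument.
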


\newcommand{\cvxnew}[1] {#1}

\begin{proof}
  Since $r(0,\Lambda)=0 < r(\hmax,\Lambda)$ and $r(h,\Lambda)$ is continuous in $h$,
  it is sufficient to prove that condition \eqref{h-monotone} holds.
  We have $|\Ropt(\Hopt\lambda)|\le 1$ for all $\lambda\in\Lambda$.
  We will show that there exists $R_\mu(z) = \sum_{j=0}^s a_j(\mu) z^j$ such
  that $a_0=a_1=1$ and $$|R_\mu(\mu \Hopt\lambda)|\le 1 \ \ \ \forall \lambda\in\Lambda, \ \  0\le\mu\le1.$$
  
  Let $\hat{a}_j$ be the coefficients of the optimal polynomial:
  \begin{equation*}
    \Ropt(z) = 1 + z + \sum_{j=2}^s\hat{a}_j z^j,
  \end{equation*}
  and set
  \begin{equation*}
    a_j(\mu) = \mu^{1-j}\hat{a_j}.
  \end{equation*}
  Then
  \begin{align*}
    R_\mu(\mu\Hopt\lambda) & = 1 + \mu\Hopt\lambda + \sum_{j=2}^s\mu^{1-j}\hat{a}_j \mu^j\Hopt^j \lambda^j =
    1 + \mu\left(\sum_{j=1}^s \hat{a}_j \Hopt^j \lambda^j\right) \\
    & = 1 + \mu(\Ropt(\Hopt\lambda)-1),
  \end{align*}  
  where we have defined $\hat{a}_1 = 1$.
  Define $g_\lambda(\mu)=R_\mu(\mu\Hopt\lambda)$.  Then $g_\lambda(\mu)$ is linear in $\mu$ and has the
  property that, for $\lambda\in\Lambda$, $|g_\lambda(0)| = 1$ 
  and $|g_\lambda(1)| \le 1$ (by the definition of $\Hopt,\Ropt$).  Thus by convexity $|g(\mu)|\le 1$
  for $0\le\mu\le1$.
\end{proof}

For $p>1$, condition \eqref{h-monotone} does not necessarily hold.
For example, take $s=p=4$; then the stability polynomial \eqref{eq:polyform}
is uniquely defined as the degree-four Taylor approximation of the exponential,
corresponding to the classical fourth-order Runge--Kutta method 
that we saw in the introduction.
Its stability region is plotted in Figure \ref{fig:rk4}.
Taking, e.g., $\lambda=0.21+2.3i$, 
one finds $|R(\lambda)|<1$ but $|R(\lambda/2)|>1$.
Although this example shows that Algorithm \ref{alg:bisection} might formally fail,
it concerns only the trivial case $s=p$ in which there is only one possible choice of
stability polynomial.
We have searched without success for a situation with $s>p$ for which condition
\eqref{h-monotone} is violated.

\subsection{Convergence for starlike regions}
In many important applications the relevant set $\Lambda$ is an infinite set;
for instance, if we wish to design a method for some PDE semi-discretization
that will be stable for any spatial discretization size.
In this case, Problem \ref{prob:stab_opt} is
a semi-infinite program (SIP) as it involves infinitely many constraints.
Furthermore, $\Lambda$ is often a closed curve whose interior is starlike with
respect to the origin; for example, upwind semi-discretizations of hyperbolic
PDEs have this property.  Recall that a region $S$ is starlike if 
$t \in S$ implies $\mu t \in S$ for all $0\le \mu\le 1$.

\begin{lem} \label{lem:starlike}
Let $\Lambda\in\Complex$ be a closed curve passing through the origin and
enclosing a starlike region.
Let $r(h,\Lambda)$ denote the solution of Problem \ref{prob:unconstrained}.
Then condition \eqref{h-monotone} holds.
\end{lem}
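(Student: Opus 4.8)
The plan is to establish condition \eqref{h-monotone} by re-using, at the smaller step size $h$, the very polynomial that realizes stability at $h_0$; the mechanism that transfers stability from $h_0$ to $h$ is the maximum modulus principle applied on the starlike region enclosed by $\Lambda$.

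First I would unpack the hypothesis $r(h_0,\Lambda)=0$. By definition of $r$ as an infimum, for every $\epsilon>0$ there is a polynomial $R(z)=\sum_{j=0}^s a_j z^j$ satisfying the order conditions $a_j=1/j!$ for $0\le j\le p$ and $|R(h_0\lambda)|\le 1+\epsilon$ for all $\lambda\in\Lambda$ (indeed, when $\Lambda$ is finite the infimum is attained and one may take $\epsilon=0$ outright). Let $G$ denote the closed region bounded by $\Lambda$. Since $G$ is starlike with respect to the origin, $\mu z\in G$ whenever $z\in G$ and $0\le\mu\le1$; in particular $G$ is compact, $h_0G$ is compact, and $\partial(h_0G)\subseteq h_0\Lambda$.

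Next I would invoke the maximum modulus principle: $R$ is entire, so $|R|$ restricted to the compact set $h_0G$ attains its maximum on the boundary $\partial(h_0G)\subseteq h_0\Lambda$, whence
\[
  |R(z)|\le\max_{\lambda\in\Lambda}|R(h_0\lambda)|\le 1+\epsilon\qquad\text{for all } z\in h_0G.
\]
Finally, for $0\le h\le h_0$ set $\mu=h/h_0\in[0,1]$; then for any $\lambda\in\Lambda\subseteq G$ we have $\mu\lambda\in G$ by starlikeness, so $h\lambda=h_0(\mu\lambda)\in h_0G$ and therefore $|R(h\lambda)|\le 1+\epsilon$. Since $R$ still satisfies the order conditions it is feasible for Problem \ref{prob:unconstrained} at step size $h$, giving $r(h,\Lambda)\le\max_{\lambda\in\Lambda}(|R(h\lambda)|-1)\le\epsilon$. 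Letting $\epsilon\to0$ yields $r(h,\Lambda)\le0$, which is exactly \eqref{h-monotone}.

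I expect the substantive work to lie not in the analysis but in the topology of $\Lambda$: one must justify that a ``closed curve passing through the origin and enclosing a starlike region'' genuinely bounds a compact region $G$ with $\partial G\subseteq\Lambda$ (so that the maximum modulus principle transfers the bound from $\Lambda$ to all of $G$), and that starlikeness with respect to a \emph{boundary} point — the origin lies on $\partial G$ here, since $R(0)=1$ forces $0$ to the boundary of $\{|R|\le1\}$ — still delivers $\mu G\subseteq G$. Once those geometric facts are in place, the maximum modulus step and the scaling are routine. A secondary point to handle cleanly is the case of infinite $\Lambda$ where the infimum defining $r(h_0,\Lambda)$ need not be attained; the $\epsilon$-argument above is written precisely to cover it.
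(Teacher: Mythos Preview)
Your argument is correct and follows essentially the same route as the paper's own proof: reuse at step size $h$ the polynomial that is stable at $h_0$, invoke the maximum modulus principle to propagate the bound from the curve $h_0\Lambda$ to the enclosed region, and then use starlikeness to place $h\Lambda$ inside that region. Your $\epsilon$-limiting device for the case where the infimum defining $r(h_0,\Lambda)$ is not attained, and your explicit flagging of the topological assumptions on $\Lambda$, are refinements that the paper glosses over, but the core mechanism is identical.
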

\begin{proof}
Let $\Lambda$ be as stated in the lemma.
Suppose $r(h_0,\Lambda)=0$ for some $h_0>0$; then there exists $R(z)$ 
such that $|R(h\lambda)|\le 1$ for all $\lambda \in \Lambda$.
According to the maximum principle, the stability region of $R(z)$ must contain
the region enclosed by $\Lambda$.
Choose $h$ such that $0\le h\le h_0$; then $h\Lambda$ lies in the region enclosed
by $\Lambda$, so $|R(h\lambda)|\le 1$ for $\lambda\in\Lambda$.
\end{proof}

The proof of Lemma \ref{lem:starlike} relies crucially on $\Lambda$ being
an infinite set, but in practice we numerically solve Problem \ref{prob:unconstrained}
with only finitely many constraints.  To this end we
introduce a sequence of discretizations $\Lambda_n$ with
the following properties:
\begin{enumerate}
    \item $\Lambda_n\subset\Lambda$ 
    \item $n_1\le n_2 \implies \Lambda_{n_1} \subset \Lambda_{n_2}$
    \item $\lim_{n\to\infty} \Lambda_n = \Lambda$
    \item $\lim_{n\to\infty}\nu_n=0$ where $\nu_n$ denotes the maximum distance
            from a point in $\Lambda$ to the set $\Lambda_n$:
$$\nu_n = \max_{\gamma\in\Lambda} \min_{\lambda\in\Lambda_n} |\gamma-\lambda|.$$
\end{enumerate}
For instance, $\Lambda_n$ can be taken as an equispaced (in terms of
arc-length, say) sampling of $n$ points.

By modifying Algorithm \ref{alg:bisection}, we can approximate
the solution of the semi-infinite programming problem for starlike regions
to arbitrary accuracy.  At each step we solve Problem \ref{prob:unconstrained}
with $\Lambda_n$ replacing $\Lambda$.
The key to the modified algorithm is to only increase $\hmin$ after obtaining
a certificate of feasibility.  This is done by using the Lipschitz constant
of $R(z)$ over a domain including $h\Lambda$ (denoted by $L(R,h\Lambda)$)
to ensure that $|R(h\Lambda)|\le 1$.
The modified algorithm is stated as Algorithm \ref{alg:bisection-SIP}.

\begin{algorithm}\caption{Bisection for SIP}
\label{alg:bisection-SIP}
\begin{algorithmic}

\State $\hmin = 0$
\State $\hmax = 2s^2/\max|\lambda|$
\State $n=n_0$
\While {$\hmax-\hmin>\epsilon$}  
    \State $h = (\hmax+\hmin)/2$                   \Comment{Bisect}
    \State Solve Problem \ref{prob:unconstrained}
    \If {$r(h,\Lambda_n)<0$ and $\nu_n < -2r/L(R,h\Lambda)$}      \Comment{Certifies that $r(h,\Lambda)<0$}
            \State $\hmin=h$
    \ElsIf {$r(h,\Lambda_n)>0$}                   \Comment{Certifies that $r(h,\Lambda)>0$}
        \State $\hmax=h$
    \Else                                          \Comment{$-\delta<r(h,\Lambda_n)\le0$}
            \State $n \gets 2n$                    \Comment{Reduce the discretization spacing}
    \EndIf
\EndWhile \\
\Return $H_\epsilon = \hmin$
\end{algorithmic}
\end{algorithm}

The following lemma, which characterizes the behavior of Algorithm
\ref{alg:bisection-SIP}, holds whether or not the interior of $\Lambda$ is
starlike.
\begin{lem} \label{lem:termination}
Let $h^{[k]}$ denote the value of $h$ after $k$ iterations of the loop
in Algorithm \ref{alg:bisection-SIP}.  Then either
\begin{itemize}
    \item Algorithm \ref{alg:bisection-SIP} terminates after a finite time with outputs satisfying
        $r(\hmin,\Lambda)\le 0$, $r(\hmax,\Lambda)>0$; or
    \item there exists $j<\infty$ such that $r(h^{[j]},\Lambda)=0$ and
            $h^{[k]}=h^{[j]}$ for all $j\ge k$.
\end{itemize}
\end{lem}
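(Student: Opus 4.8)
The plan is to analyze the three branches of the \texttt{while} loop in Algorithm \ref{alg:bisection-SIP} and show that the "stall" branch ($n \gets 2n$) cannot be taken infinitely often without producing one of the two claimed alternatives. First I would observe that each pass through the loop does exactly one of three things: raise $\hmin$ to the current $h$ (after a genuine feasibility certificate), lower $\hmax$ to the current $h$ (after a genuine infeasibility certificate $r(h,\Lambda_n)>0$, which implies $r(h,\Lambda)>0$ since $\Lambda_n\subset\Lambda$), or refine the discretization without changing $[\hmin,\hmax]$. The first two branches are sound: in the $\hmin$ branch, the test $\nu_n < -2r/L(R,h\Lambda)$ together with the Lipschitz bound on $R$ over $h\Lambda$ guarantees that $\max_{\lambda\in\Lambda}|R(h\lambda)| \le \max_{\lambda\in\Lambda_n}|R(h\lambda)| + L(R,h\Lambda)\,\nu_n \le (1 + r) + L\nu_n < 1$, so indeed $r(h,\Lambda)<0$; in the $\hmax$ branch $r(h,\Lambda)\ge r(h,\Lambda_n)>0$. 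Hence whenever the algorithm terminates (the interval has shrunk below $\epsilon$), the current endpoints satisfy $r(\hmin,\Lambda)\le 0$ and $r(\hmax,\Lambda)>0$, giving the first alternative.

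Next I would handle the case where the algorithm does \emph{not} terminate in finite time. Each non-stall iteration halves the interval $\hmax-\hmin$, so if infinitely many non-stall iterations occurred the interval width would tend to zero and the loop would exit --- contradiction. Therefore only finitely many non-stall iterations occur, meaning there is an index $j$ after which every iteration takes the stall branch; in particular $h^{[k]}=h^{[j]}=:h^\star$ for all $k\ge j$. It remains to show $r(h^\star,\Lambda)=0$. The stall branch is entered precisely when $-\delta < r(h^\star,\Lambda_n)\le 0$, i.e. $r(h^\star,\Lambda_n)\le 0$ but the certificate $\nu_n < -2r(h^\star,\Lambda_n)/L(R,h^\star\Lambda)$ fails, for every $n = 2^{k-j}n_{j}$, $k\ge j$. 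Now use property 4 of the discretizations: $\nu_n\to 0$. If $r(h^\star,\Lambda)<0$ were strictly negative, then by the same Lipschitz estimate $r(h^\star,\Lambda_n)\to r(h^\star,\Lambda)<0$ (actually $r(h^\star,\Lambda_n)\le r(h^\star,\Lambda)<0$ already, since enlarging the constraint set can only increase the least deviation), so $-2r(h^\star,\Lambda_n)/L$ is bounded below by a positive constant while $\nu_n\to 0$, and the certificate would eventually pass --- contradicting that the stall branch is taken for all $k\ge j$. Hence $r(h^\star,\Lambda)\ge 0$. Combined with $r(h^\star,\Lambda_n)\le 0$ for all such $n$ and the convergence $r(h^\star,\Lambda_n)\uparrow r(h^\star,\Lambda)$ (monotone by property 2 and convergent by properties 3--4 plus continuity of $R$), we get $r(h^\star,\Lambda)\le 0$ as well, so $r(h^\star,\Lambda)=0$, which is the second alternative with $h^{[j]}=h^\star$.

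The main obstacle I anticipate is making the limiting argument $r(h^\star,\Lambda_n)\to r(h^\star,\Lambda)$ fully rigorous: one must argue that the optimal polynomial $R$ solving Problem \ref{prob:unconstrained} on $\Lambda_n$ stays in a compact set as $n\to\infty$ (so that the relevant Lipschitz constants $L(R,h^\star\Lambda)$ are uniformly bounded), and that the max over the finite sample $\Lambda_n$ converges uniformly to the max over $\Lambda$. Compactness of the coefficient vectors follows because $r(h^\star,\Lambda_n)\le r(h^\star,\Lambda_{n_0})<\infty$ forces $\max_{\lambda\in\Lambda_{n_0}}|R(h^\star\lambda)|$ to be bounded, and evaluation at the fixed finite point set $\Lambda_{n_0}$ is (for $|\Lambda_{n_0}|\ge s+1-p$ in general position) a norm-equivalent quantity on the finite-dimensional coefficient space; then $|R|$ and its derivative are uniformly bounded on the fixed compact set $h^\star\Lambda$. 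Everything else is routine bookkeeping on the three-way branch.
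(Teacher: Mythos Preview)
Your proposal is correct and follows essentially the same route as the paper's proof: both hinge on showing that at a fixed $h$ with $r(h,\Lambda)\ne 0$ one of the two certificates must eventually fire as $n\to\infty$, and both handle the key direction (feasibility eventually certified when $r(h,\Lambda)<0$) by uniformly bounding the Lipschitz constants $L(R^{[n]},h\Lambda)$ via a Vandermonde--compactness argument on the coefficient vectors --- exactly the content of the paper's Lemma~\ref{lem:lipschitz}, which you sketch inline. The only differences are organizational (you split cases as ``terminates vs.\ stalls forever'' rather than the paper's ``$r(h^{[j]},\Lambda)=0$ for some $j$ vs.\ not'') together with a minor slip in your compactness paragraph: the inequality $r(h^\star,\Lambda_n)\le r(h^\star,\Lambda_{n_0})$ is reversed, but what you actually need and have is that the stall condition gives $\max_{\lambda\in\Lambda_n}|R^{[n]}(h^\star\lambda)|\le 1$, hence $|R^{[n]}|\le 1$ on the fixed set $\Lambda_{n_0}\subset\Lambda_n$, which is enough for the Vandermonde bound.
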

\begin{proof}
First suppose that $r(h^{[j]},\Lambda)=0$ for some $j$.  Then neither 
feasibility nor infeasibility can be certified for this value of $h$,
so $h^{[k]}=h^{[j]}$ for all $j\ge k$.

On the other hand, suppose that $r(h^{[k]},\Lambda)\ne0$ for all $k$.
The algorithm will terminate as long as, for each $h^{[k]}$, 
either feasibility or infeasibility can be certified for large enough $n$.
If $r(h^{[k]},\Lambda)>0$, then necessarily $r(h^{[k]},\Lambda_n)>0$ for large
enough $n$, so infeasibility will be certified.
We will show that if $r(h^{[k]},\Lambda)<0$, then for large enough $n$ the condition
\begin{align} \label{eq:cert}
\nu_n < -2r/L(R,h\Lambda)
\end{align}
must be satisfied.  Since $r(h,\Lambda_n)\le r(h,\Lambda)$
is bounded away from zero and $\lim_{n\to\infty}\nu_n=0$, \eqref{eq:cert} must
be satisfied for large enough $n$ unless the Lipschitz constant $L(R,h\Lambda)$
is unbounded (with with respect to $n$) for some fixed $h$.  Suppose by way
of contradiction that
this is the case, and let $R^{[1]},R^{[2]},\dots$ denote the corresponding sequence of
optimal polynomials.  Then the norm of the vector of coefficients $a_j^{[i]}$ appearing
in $R^{[i]}$ must also grow
without bound as $i\to\infty$.  By Lemma \ref{lem:lipschitz}, this implies that
$|R^{[i]}(z)|$ is unbounded
except for at most $s$ points $z\in\Complex$.  But this contradicts the
condition $|R^{[i]}(h\lambda)|\le1$ for $\lambda\in\Lambda_n$ when $n>s$.
Thus, for large enough $n$ we must have $\nu_n<-2r/L(R,h\Lambda$).
\end{proof}

In practical application, $r(h,\Lambda)=0$ will not be 
detected, due to numerical errors; see Section \ref{sec:threshold}.  
For this reason, in the next theorem
we simply assume that Algorithm \ref{alg:bisection-SIP} terminates.
We also require the following technical result, whose proof is deferred to
the appendix.

\begin{lem} \label{lem:lipschitz}
Let $R^{[1]}, R^{[2]}, \dots$ be a sequence of polynomials of
degree at most $s$ ($s\in\mathbb{N}$ fixed) and denote the coefficients of $R^{[i]}$ by
$a_j^{[i]}\in\mathbb{C}$ ($i\in\mathbb{N}$, $0\le j\le s$): \[R^{[i]}(z) = \sum_{j=0}^s a_j^{[i]} z^j, \quad z\in\mathbb{C}.\] Further, let
$a^{[i]} := (a_0^{[i]},a_1^{[i]},\dots,a_s^{[i]})^T$ and
suppose that the sequence $\|a^{[i]}\|$  is unbounded in $\mathbb{R}$.
Then the sequences $R^{[i]}(z)$ are unbounded for all but at most $s$ points
$z\in\mathbb{C}$.
\end{lem}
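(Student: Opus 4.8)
\textbf{Proof proposal for Lemma \ref{lem:lipschitz}.}

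The plan is to argue by contradiction and exploit the fact that a polynomial of degree at most $s$ is determined by its values at $s+1$ points. Suppose the conclusion fails, so that the sequences $R^{[i]}(z)$ are bounded for at least $s+1$ distinct points $z_0,z_1,\dots,z_s\in\mathbb{C}$. I would use the Vandermonde interpolation identity: the coefficient vector $a^{[i]}$ is recovered from the value vector $(R^{[i]}(z_0),\dots,R^{[i]}(z_s))^T$ by applying the inverse of the Vandermonde matrix $V$ built from the nodes $z_0,\dots,z_s$. Since the $z_k$ are distinct, $V$ is invertible and $V^{-1}$ is a fixed matrix independent of $i$. Hence $\|a^{[i]}\| \le \|V^{-1}\|\,\bigl\|(R^{[i]}(z_0),\dots,R^{[i]}(z_s))\bigr\|$, and the right-hand side is bounded because each coordinate sequence is bounded. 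This contradicts the hypothesis that $\|a^{[i]}\|$ is unbounded, so $R^{[i]}(z)$ can be bounded on at most $s$ points.

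Concretely, the steps in order are: (i) assume for contradiction there are $s+1$ distinct points at which the value sequences stay bounded, say bounded by some constant $M$; (ii) write down the linear system $V a^{[i]} = y^{[i]}$ where $V_{k,j} = z_k^{\,j}$ and $y^{[i]}_k = R^{[i]}(z_k)$; (iii) invoke invertibility of the Vandermonde matrix for distinct nodes to conclude $a^{[i]} = V^{-1} y^{[i]}$; (iv) take norms to get $\|a^{[i]}\| \le \|V^{-1}\|_{\mathrm{op}}\, \sqrt{s+1}\, M$ for all $i$, contradicting unboundedness of $\|a^{[i]}\|$; (v) conclude that the set of points where $R^{[i]}(z)$ is bounded has cardinality at most $s$.

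I do not expect a genuine obstacle here — the result is essentially a restatement of the invertibility of polynomial interpolation — but the one point requiring a little care is the logical structure of the ``all but at most $s$ points'' phrasing: one must check that it is not enough for the value sequences to be bounded along a subsequence. In fact the argument as set up handles full sequences directly: if boundedness held at $s+1$ points for the whole sequence, the whole coefficient sequence would be bounded. (If one wanted the subsequential version it would follow by passing to a common subsequence, but that is not needed for the statement as given.) A second minor point is that ``bounded'' should be interpreted as bounded over all $i\in\mathbb{N}$ simultaneously; this is exactly what enters the estimate in step (iv), so no extra work is needed. I would close by remarking that this is precisely the fact used in the proof of Lemma \ref{lem:termination}: an unbounded coefficient sequence forces $|R^{[i]}(z)|$ to blow up at all but finitely many sample points, which is incompatible with the stability bound $|R^{[i]}(h\lambda)|\le 1$ on a discretization $\Lambda_n$ with more than $s$ points.
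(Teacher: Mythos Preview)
Your proposal is correct and follows essentially the same route as the paper: assume for contradiction that the value sequences are bounded at $s+1$ distinct nodes, write $V a^{[i]} = y^{[i]}$ with the Vandermonde matrix $V$, invert, and bound $\|a^{[i]}\|$ by $\|V^{-1}\|\cdot\|y^{[i]}\|$ to contradict the unboundedness of the coefficient vectors. The only cosmetic difference is that you make the bound explicit via $\sqrt{s+1}\,M$, whereas the paper simply notes that $\|r_i\|$ is bounded.
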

\begin{proof} Suppose to the contrary there are $s+1$ $\mathit{distinct}$
complex numbers, say, $z_0, z_1, \ldots, z_s$ such that the vectors 
$r_i:=(R^{[i]}(z_0),R^{[i]}(z_1),\ldots,R^{[i]}(z_s))^T$ ($i\in\mathbb{N}$) are \textit{bounded} in $
\mathbb{C}^{s+1}$.
Let $V$ denote the $(s+1)\times(s+1)$ Vandermonde matrix whose $k^\mathrm{th}$ row
($0\le k\le s+1)$ is $(1,z_k,z_k^2,\ldots,z_k^s)$. Then $V$ is invertible and  
we have $a^{[i]}=V^{-1}r_i$ ($i\in\mathbb{N}$), so if $\left|\!\left|\!\left|\cdot\right|\!\right|\!\right|$ denotes the induced matrix norm, then
\[
\|a^{[i]}\|=\| V^{-1}r_i\|\le \left|\!\left|\!\left|V^{-1}\right|\!\right|\!\right|\,\|r_i\|.
\]
But, by assumption, the right hand side is bounded, whereas the left hand side is not.
\end{proof}

\begin{theorem}[Global convergence for strictly starlike regions]
Let $\Lambda$ be a closed curve that encloses a region that is starlike with respect
to the origin.  Suppose that Algorithm \ref{alg:bisection-SIP} terminates
for all small enough $\epsilon$, and let $H_\epsilon$ denote the value returned by
Algorithm \ref{alg:bisection-SIP} for a given $\epsilon$.  Let
$\Hopt$ denote the solution of Problem \ref{prob:stab_opt}.
Then
$$\lim_{\epsilon\to0} H_\epsilon = \Hopt.$$
\end{theorem}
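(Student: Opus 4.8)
The plan is to combine the two preceding lemmas with a squeeze argument.  By Lemma~\ref{lem:termination}, since we are assuming termination, for each small enough $\epsilon$ the algorithm returns $H_\epsilon = \hmin$ with the guarantees $r(\hmin,\Lambda)\le 0$ and $r(\hmax,\Lambda)>0$, and moreover $\hmax - \hmin \le \epsilon$ at termination.  Thus it suffices to show two things: first, that $r(h,\Lambda)\le 0$ implies $h \le \Hopt$ and $r(h,\Lambda) > 0$ implies $h > \Hopt$ (so that $\Hopt \in [\hmin,\hmax]$ throughout); and second, that $H_\epsilon = \hmin \le \Hopt \le \hmax \le \hmin + \epsilon$ forces $|H_\epsilon - \Hopt| \le \epsilon$, which gives the limit as $\epsilon\to 0$.

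First I would pin down the relationship between $r(\cdot,\Lambda)$ and $\Hopt$.  By definition $r(h,\Lambda)\le 0$ exactly when there is an admissible stability polynomial $R$ (satisfying the order conditions) with $|R(h\lambda)|\le 1$ for all $\lambda\in\Lambda$, i.e.\ exactly when $h$ is feasible for Problem~\ref{prob:stab_opt}; hence $\{h\ge 0 : r(h,\Lambda)\le 0\}$ is the feasible set and $\Hopt$ is its supremum.  By Lemma~\ref{lem:starlike} (using that $\Lambda$ encloses a starlike region), condition~\eqref{h-monotone} holds, so this feasible set is an interval $[0,\Hopt]$ — in particular it is closed, so the supremum $\Hopt$ is attained and $r(\Hopt,\Lambda)\le 0$.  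Consequently $r(h,\Lambda)\le 0 \iff h\le\Hopt$ and $r(h,\Lambda)>0 \iff h>\Hopt$.

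Next I would run the bisection invariant.  Initially $\hmin = 0 \le \Hopt$, and I would check that the initial $\hmax = 2s^2/\max|\lambda|$ satisfies $\hmax \ge \Hopt$; this follows because the stability region of any degree-$s$ polynomial (admissible or not) is contained in a disk of radius roughly $2s^2$ about the origin, a standard bound (e.g.\ via the Bernstein/Markov inequality relating the real stability interval to $s^2$), so $r(h,\Lambda) > 0$ whenever $h\max|\lambda| > 2s^2$.  Then each branch of the loop preserves $\hmin \le \Hopt \le \hmax$: the first branch sets $\hmin = h$ only after certifying $r(h,\Lambda)<0$, hence $h\le\Hopt$; the second sets $\hmax = h$ only after certifying $r(h,\Lambda)>0$, hence $h\ge\Hopt$; the third branch changes neither.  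Therefore at termination $\hmin \le \Hopt \le \hmax$ and $\hmax - \hmin \le \epsilon$, so $0 \le \Hopt - H_\epsilon = \Hopt - \hmin \le \epsilon$, and letting $\epsilon\to 0$ gives $\lim_{\epsilon\to 0} H_\epsilon = \Hopt$.

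The main obstacle I anticipate is not the bisection bookkeeping but the clean justification that the initial $\hmax$ is genuinely an upper bound for $\Hopt$, and — more delicately — the soundness of the certificate in the first branch, namely that $r(h,\Lambda_n) < 0$ together with $\nu_n < -2r/L(R,h\Lambda)$ really does imply $r(h,\Lambda) < 0$ (so that we never wrongly raise $\hmin$ above $\Hopt$).  This is a Lipschitz-continuation argument: any $\gamma\in\Lambda$ is within $\nu_n$ of some $\lambda\in\Lambda_n$, so $|R(h\gamma)| \le |R(h\lambda)| + L(R,h\Lambda)\,\nu_n \le (1 + r) + L(R,h\Lambda)\nu_n < 1$ by the stated inequality (using $r = r(h,\Lambda_n) < 0$); I would make sure the domain over which $L(R,h\Lambda)$ is taken contains $h\Lambda$, which it does by construction.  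Everything else is routine once the correspondence $r(h,\Lambda)\le 0 \iff h\le\Hopt$ is in hand, and that correspondence rests entirely on Lemma~\ref{lem:starlike}.
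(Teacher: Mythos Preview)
Your proposal is correct and follows essentially the same route as the paper: invoke Lemma~\ref{lem:termination} to obtain $r(\hmin,\Lambda)\le 0<r(\hmax,\Lambda)$ at termination, use Lemma~\ref{lem:starlike} to conclude $\hmin\le \Hopt\le \hmax$, and squeeze using $\hmax-\hmin\le\epsilon$. Your extra discussion of the Lipschitz certificate and the initial choice of $\hmax$ is not needed for the theorem itself (those concerns are already absorbed into Lemma~\ref{lem:termination} and the algorithm design), and your Bernstein/Markov justification of the $2s^2/\max|\lambda|$ bound is a bit off target since that bound is specific to real-axis inclusion---but none of this affects the soundness of your main argument.
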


\begin{proof}
Due to the assumptions and Lemma \ref{lem:termination}, we have that
$r(\hmin,\Lambda)<0<r(\hmax,\Lambda)$.
Then Lemma \ref{lem:starlike} implies that $\hmin<\Hopt<\hmax$.
Noting that also $\hmax-\hmin<\epsilon$, the result follows.
\end{proof}

Despite the lack of a general global convergence proof, Algorithm
\ref{alg:bisection} works very well in practice even for general $\Lambda$ when $p>1$.
In all cases we have tested and for which the true $\Hopt$ is known (see
Section \ref{sec:examples}), Algorithm \ref{alg:bisection}
appears to converge to the globally optimal solution.
Furthermore, Algorithm \ref{alg:bisection} is very fast.  For these
reasons, we consider the (much slower) Algorithm \ref{alg:bisection-SIP}
to be of primarily theoretical interest, and we base our practical
implementation on Algorithm \ref{alg:bisection}.

\section{Numerical implementation\label{sec:implementation}}
We have made a prototype implementation of Algorithm \ref{alg:bisection} in
\matlab.
The implementation relies heavily on the \cvx package
\cite{CVX,gb08}, a \matlab-based modeling system for convex optimization, which
in turn relies on the interior-point solvers \sedumi \cite{sturm1999using} 
and \sdpt \cite{tutuncu2003solving}.
The least deviation problem (Problem
\ref{prob:unconstrained}) can be succinctly stated in four lines of the
\cvx problem language, and for many cases is solved in under a second by either
of the core solvers.

Our implementation re-attempts failed solves (see Section \ref{change-basis})
with the alternate interfaced solver.  In our test cases, we observed that the
\sdpt interior-point solver was slower, but more robust than \sedumi.
Consequently, our prototype implementation uses \sdpt by default.  

Using the resulting implementation, we were able to successfully solve problems to within 
$0.1\%$ accuracy or better with scaled eigenvalue
magnitudes $|h\lambda|$ as large as 4000.  As an example, comparing with
results of \cite{Bogatyrev2005a} for spectra on the real axis with $p=3, s=27,$ 
our results are accurate to 6 significant digits.

\subsection{Feasibility threshold\label{sec:threshold}}

In practice, CVX often returns a small positive objective ($r\approx 10^{-7}$)
for values of $h$ that are just feasible.  Hence
the bisection step is accepted if $r<\epsilon$ where $\epsilon\ll 1$.
The results are generally insensitive (up to the first few digits) to the choice of $\epsilon$
over a large range of values; we have used $\epsilon=10^{-7}$ for all results in this work.
The accuracy that can be achieved is eventually limited by the 
need to choose a suitable value $\epsilon$.

\subsection{Conditioning and change of basis\label{change-basis}}
Unfortunately, for large values of $h\lambda$, the numerical solution of
Problem \ref{prob:unconstrained}
becomes difficult due to ill-conditioning of the constraint matrix.  
Observe from \eqref{eq:polyform} that the constrained quantities
$R(h\lambda)$ are related to the decision variables $a_j$ through
multiplication by a Vandermonde matrix.  Vandermonde matrices are 
known to be ill-conditioned for most choices of abscissas.
For very large $h\lambda$, the resulting \cvx problem cannot be reliably
solved by either of the core solvers.

A first approach to reducing the condition number of the constraint matrix is to
rescale the monomial basis.  We have found that a more robust approach for
many types of spectra can be obtained by choosing a basis that is approximately
orthogonal over the given spectrum $\{\Lambda\}$.
Thus we seek a solution of the form
\begin{align}
    R(z) & = \sum_{j=0}^s a_j Q_j(z) & \mbox{where } \quad \quad \quad
    Q_j(z) & = \sum_{k=0}^j b_{jk} z^k.
\end{align}
Here $Q_j(z)$ is a degree-$j$ polynomial chosen to give a well-conditioned constraint
matrix.  The drawback of not using the monomial basis is that the dimension of the problem
is $s+1$ (rather than $s+1-p$) and we must now impose the order conditions explicitly:
\begin{align}
      \sum_{j=0}^s a_j b_{jk} = \frac{1}{k!} \quad \quad \mbox{ for } \ \ k=0,1,\dots,p.
\end{align}
Consequently, using a non-monomial basis increases the number of design
variables in the problem and introduces an equality constraint matrix $B \in \Real^{p \times s}$
that is relatively small (when $p\ll s$), but usually very poorly conditioned.
However, it can dramatically improve the conditioning of the inequality
constraints.

The choice of the basis $Q_j(z)$
is a challenging problem in general.
In the special case of a negative real spectrum, an obvious choice is
the Chebyshev polynomials (of the first kind) $T_j$, shifted
and scaled to the domain $[hx, 0]$ where  $x=\min_{\lambda\in\Lambda} \operatorname{Re}(\lambda)$,
via an affine map:
\begin{align} \label{eq:shifted_cheb}
    Q_j(z) & = T_j\left(1+\frac{2z}{hx}\right).
\end{align}
The motivation for using this basis 
is that $|Q_j(h\lambda)|\le 1$ for all $\lambda\in[hx,0]$.
This basis is also suggested by the fact that $Q_j(z)$ is the optimal stability
polynomial in terms of negative real axis inclusion for $p=1,s=j$.  In Section
\ref{sec:examples}, we will see that this choice of basis works well for more
general spectra when the largest magnitude eigenvalues lie near the negative
real axis.

As an example, we consider a spectrum of 3200 equally spaced values $\lambda$ in the 
interval $[-1,0]$.  The exact solution is known to be  $h=2s^2$.
Figure \ref{fig:cond} shows the relative error as well as the inequality constraint matrix condition
number obtained by using the monomial \eqref{eq:polyform} and Chebyshev
\eqref{eq:shifted_cheb} bases.  Typically, the
solver is accurate until the condition number reaches about $10^{16}$.  This supports
the hypothesis that it is the conditioning of the inequality constraint matrix that
leads to failure of the solver.  The Chebyshev basis keeps the condition number small
and yields accurate answers even for very large values of $h$.

\begin{figure}
  \center
  \includegraphics[width=5in]{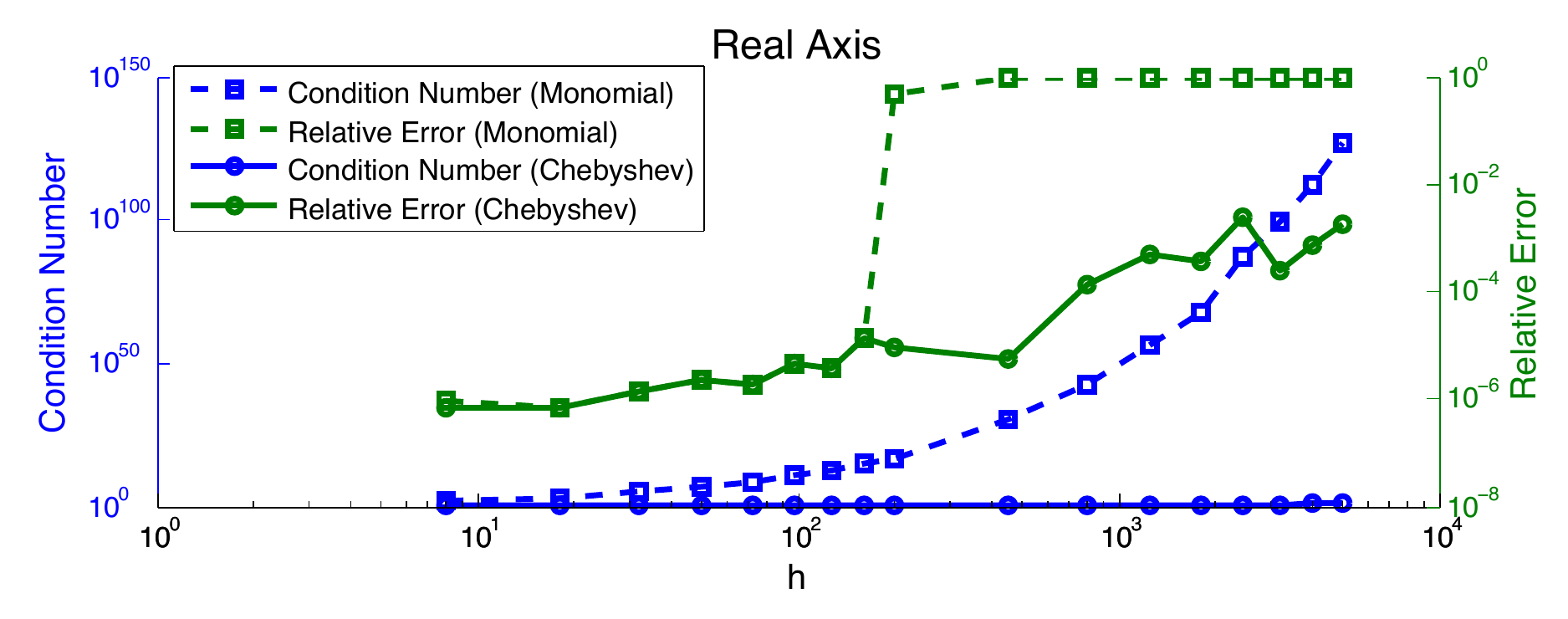}
  \caption{Condition number of principal constraint matrix and relative solution accuracy versus optimal step size.  The points along a given curve correspond to different choices of $s$.\label{fig:cond}}
\end{figure}


\subsection{Choice of initial upper bound}
The bisection algorithm requires as input an initial $\hmax$ such that $r(\hmax,\Lambda)>0$.
Theoretical values can be obtained using the classical upper bound of $2s^2/x$ if $\Lambda$
encloses a negative real interval $[x,0]$, or using the upper bound given in
\cite{Sanz-Serna1986a} if $\Lambda$ encloses an ellipse in the left half-plane.
Alternatively, one could start with a guess and successively double it until
$r(\hmax,\Lambda)>0$ is satisfied.  Since evaluation of $r(h,\Lambda)$ is 
typically quite fast, finding a tight initial $\hmax$ is not an essential
concern.

\section{Examples\label{sec:examples}}
We now demonstrate the effectiveness of our algorithm by applying it to 
determine optimally stable polynomials (i.e., solve Problem \ref{prob:stab_opt})
for various types of spectra.  As stated above, we use Algorithm \ref{alg:bisection}
for its simplicity, speed, and effectiveness.  When $\Lambda$ corresponds to
an infinite set, we approximate it by a fine discretization.

\subsection{Verification}
In this section, we apply our algorithm to some well-studied cases
with known exact or approximate results in order to verify its accuracy
and correctness.  In addition to the real axis, imaginary axis, and disk
cases below, we have successfully recovered the results of \cite{Niegemann2011}.
Our algorithm succeeds in finding the globally optimal
solution in every case for which it is known, except in some cases of 
extremely large step sizes for which the underlying solvers (\sdpt and \sedumi)
eventually fail.  

\subsubsection{Negative real axis inclusion}
Here we consider the largest $h$ such that $[-h,0]\in S$ by taking $\Lambda=[-1,0]$.
This is the most heavily studied case in the literature, 
as it applies to the semi-discretization of parabolic PDEs
and a large increase of $\Hopt$ is possible when $s$ is increased
(see, e.g., \cite{Riha1972a,vanderhouwen1996,Medovikov1998,Bogatyrev2005a,Skvortsov2011a}).
For first-order accurate methods ($p=1$), the optimal polynomials are just
shifted Chebyshev polynomials, and the optimal timestep is $\Hopt=2s^2$.
Many special analytical and numerical techniques have been
developed for this case; the most powerful
seems to be that of Bogatyrev \cite{Bogatyrev2005a}.

We apply our algorithm to a discretization of $\Lambda$ (using 6400 evenly-spaced points)
and using the shifted and scaled Chebyshev basis \eqref{eq:shifted_cheb}.
Results for up to $s=40$ are shown in Table \ref{tbl:realaxis123410}
(note that we list $\Hopt/s^2$ for easy comparison, since $\Hopt$ is approximately
proportional to $s^2$ in this case).  We include results for $p=10$ to
demonstrate the algorithm's ability to handle high-order methods.
For $p=1$ and $2$, the values computed here match those available in the literature
\cite{vanderHouwen1972}.  Most of the values for $p=3,4$ and $10$ are new results.
Figure \ref{fig:realstab} shows some examples of stability regions for optimal methods.
As observed in the literature, it seems that $\Hopt/s^2$ tends to a constant
(that depends only on $p$) as $s$ increases.  For large values of $s$, some results
in the table have an error of about $10^{-3}$ due to inaccuracies in the
numerical results provided by the interior point solvers.

\begin{table}\centering
\begin{tabular}{l|lllll} 
       & \multicolumn{5}{c}{$\Hopt/s^2$} \\ 
Stages & $p=1$ & $p=2$ & $p=3$ & $p=4$ & $p=10$ \\ \hline 
 1  & 2.000  &  &  &  & \\ 
 2  & 2.000  & 0.500  &  &  & \\ 
 3  & 2.000  & 0.696  & 0.279 &  & \\ 
 4  & 2.000  & 0.753  & 0.377  & 0.174  & \\ 
 5  & 2.000  & 0.778  & 0.421  & 0.242  & \\ 
 6  & 2.000  & 0.792  & 0.446  & 0.277  & \\ 
 7  & 2.000  & 0.800  & 0.460  & 0.298  & \\ 
 8  & 2.000  & 0.805  & 0.470  & 0.311  & \\ 
 9  & 2.000  & 0.809  & 0.476  & 0.321  & \\ 
 10  & 2.000  & 0.811  & 0.481  & 0.327  & 0.051 \\ 
 15  & 2.000  & 0.817  & 0.492  & 0.343  & 0.089 \\ 
 20  & 2.000  & 0.819  & 0.496  & 0.349  & 0.120 \\ 
 25  & 2.000  & 0.820  & 0.498  & 0.352  & 0.125 \\ 
 30  & 2.001  & 0.821  & 0.499  & 0.353  & 0.129 \\ 
 35  & 2.000  & 0.821  & 0.499  & 0.354  & 0.132 \\ 
 40  & 2.000  & 0.821  & 0.500  & 0.355  & 0.132 \\ 
\hline 
\end{tabular} 
\caption{Scaled size of real axis interval inclusion for optimized methods.\label{tbl:realaxis123410}}
\end{table}

\begin{figure}
  \center
  \subfigure[$p=4, s=20$]{\includegraphics[height=0.8in]{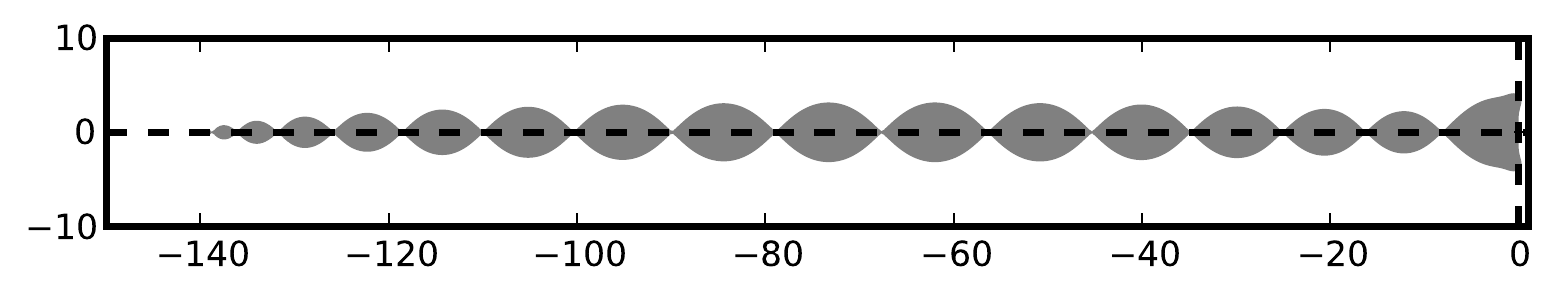}} \\
  \subfigure[$p=10, s=20$]{\includegraphics[height=0.8in]{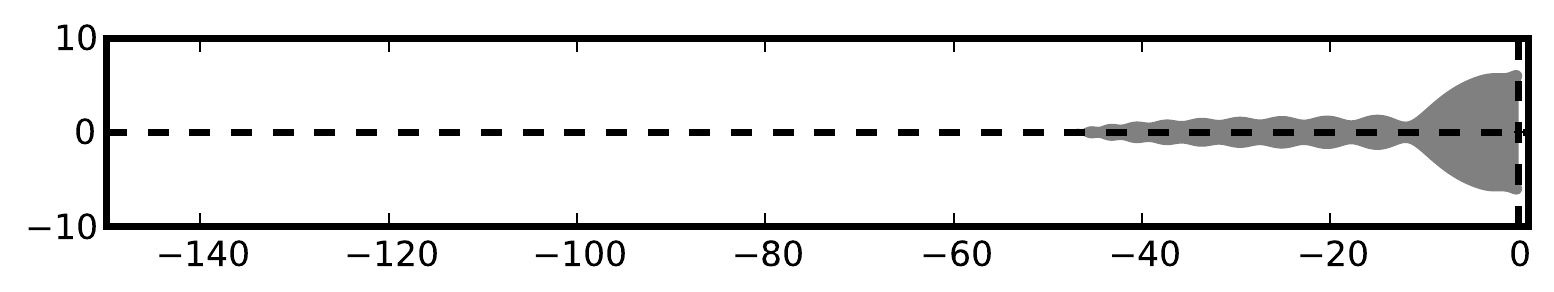}}
  \caption{Stability regions of some optimal methods for real axis inclusion.\label{fig:realstab}}
\end{figure}

\subsubsection{Imaginary axis inclusion}
Next we consider the largest $h$ such that $[-ih,ih]\in S$ by taking
$\Lambda=xi, x\in[-1,1]$.
Optimal polynomials for imaginary axis inclusion have also been studied by many
authors, and a number of exact results are known or conjectured
\cite{vanderHouwen1972,vichnevetsky1983,kinnmark1984b,kinnmark1984a,ingemar1986,vanderhouwen1996}.
We again approximate the problem, taking $N=3200$ evenly-spaced values 
in the interval $[0,i]$ (note that stability regions are necessarily symmetric
about the real axis since $R(z)$ has real coefficients).  We use a ``rotated"
Chebyshev basis defined by
$$
Q_j(z) = i^j T_j\left(\frac{iz}{hx}\right),
$$
where $x=\max_i(|\operatorname{Im}(\lambda_i)|)$.  Like the Chebyshev basis for
the negative real axis, this basis dramatically improves the robustness of the algorithm for 
imaginary spectra.
Table \ref{tbl:imagaxis1234} shows the optimal effective step sizes.
In agreement with \cite{vanderHouwen1972,kinnmark1984a}, we find $H=s-1$ for $p=1$ (all $s$)
and for $p=2$ ($s$ odd).  
We also find $H=s-1$ for $p=1$ and $s$ even, which was conjectured in \cite{vichnevetsky1983}
and confirmed in \cite{vanderhouwen1996}.
We find $\Hopt=\sqrt{s(s-2)}$ for $p=2$ and $s$ even, strongly suggesting
that the polynomials given in \cite{kinnmark1984b} are optimal for these cases;
on the other hand, our results show that those polynomials, while third order
accurate, are not optimal for $p=3$ and $s$ odd.
Figure \ref{fig:imagstab} shows some examples of stability regions for optimal methods.

\begin{table}\centering
\begin{tabular}{l|llll} 
       & \multicolumn{4}{c}{$\Hopt/s$} \\ 
Stages & $p=1$ & $p=2$ & $p=3$ & $p=4$ \\ \hline 
 2  & 0.500  &  &  & \\ 
 3  & 0.667  & 0.667  & 0.577 & \\ 
 4  & 0.750  & 0.708  & 0.708  & 0.707 \\ 
 5  & 0.800  & 0.800  & 0.783  & 0.693 \\ 
 6  & 0.833  & 0.817  & 0.815  & 0.816 \\ 
 7  & 0.857  & 0.857  & 0.849  & 0.813 \\ 
 8  & 0.875  & 0.866  & 0.866  & 0.866 \\ 
 9  & 0.889  & 0.889  & 0.884  & 0.864 \\ 
 10  & 0.900  & 0.895  & 0.895  & 0.894 \\ 
 15  & 0.933  & 0.933  & 0.932  & 0.925 \\ 
 20  & 0.950  & 0.949  & 0.949  & 0.949 \\ 
 25  & 0.960  & 0.960  & 0.959  & 0.957 \\ 
 30  & 0.967  & 0.966  & 0.966  & 0.966 \\ 
 35  & 0.971  & 0.971  & 0.971  & 0.970 \\ 
 40  & 0.975  & 0.975  & 0.975  & 0.975 \\ 
 45  & 0.978  & 0.978  & 0.978  & 0.977 \\ 
 50  & 0.980  & 0.980  & 0.980  & 0.980 \\ 
\hline 
\end{tabular} 
\caption{Scaled size of imaginary axis inclusion for optimized methods.\label{tbl:imagaxis1234}}
\end{table}

\begin{figure}
  \center
  \subfigure[$p=1, s=7$]{\includegraphics[width=1.5in]{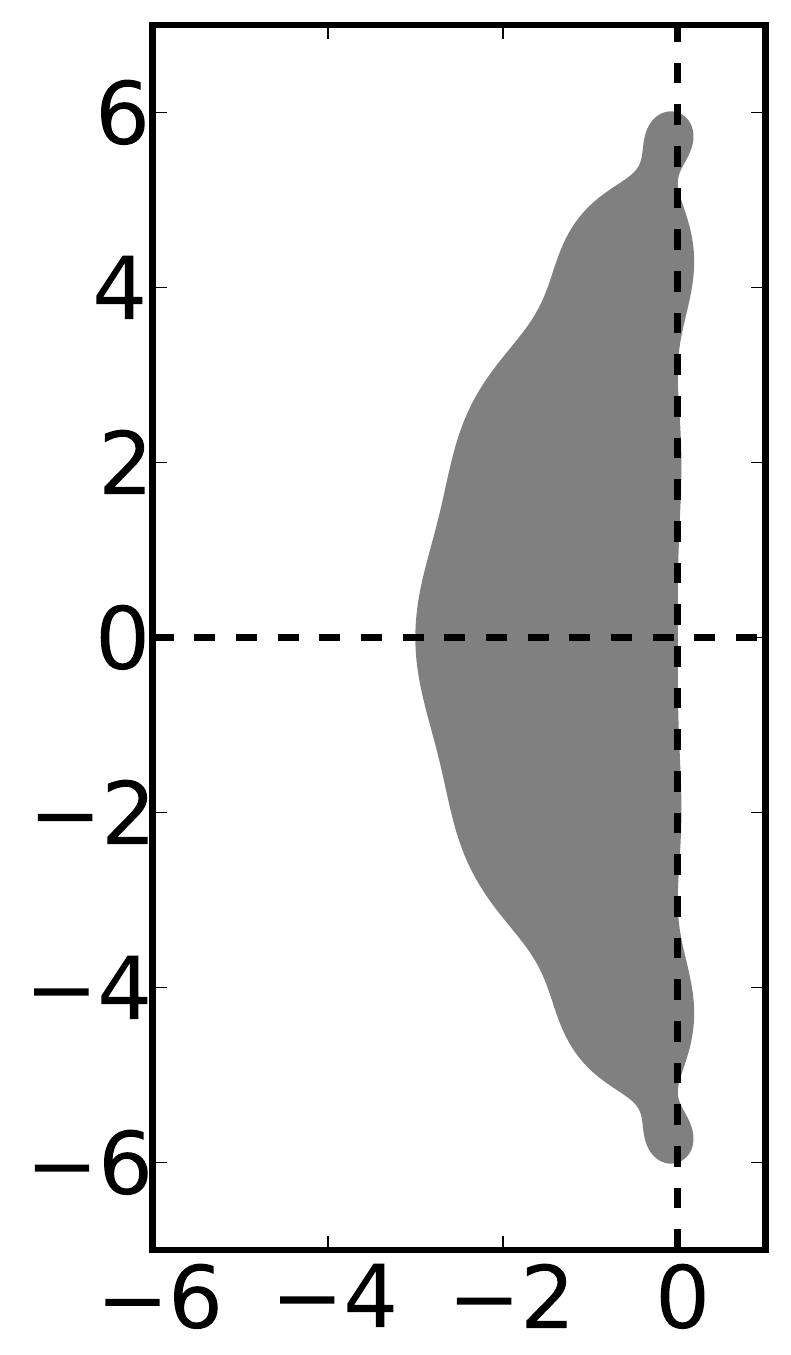}} \quad \quad
  \subfigure[$p=4, s=7$]{\includegraphics[width=1.5in]{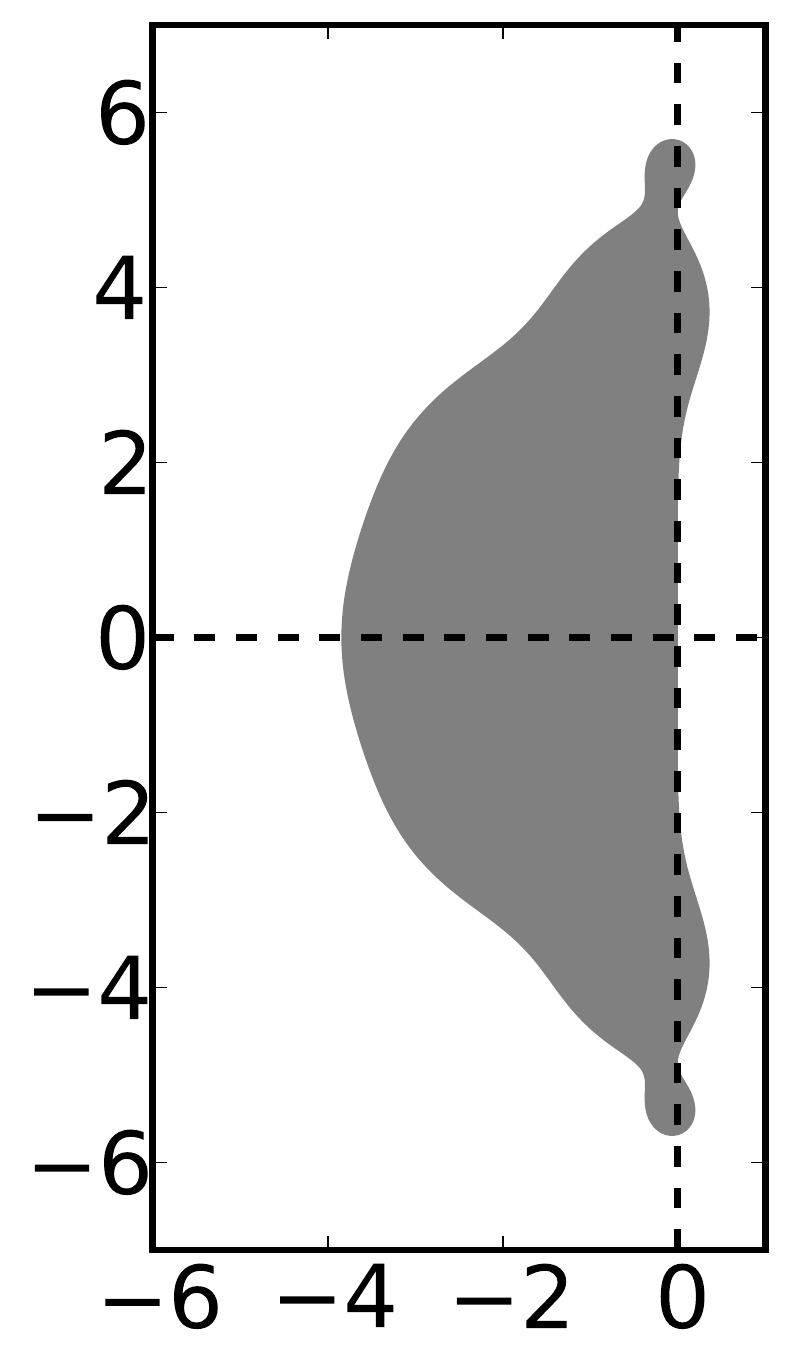}}
  \caption{Stability regions of some optimal methods for imaginary axis inclusion.\label{fig:imagstab}}
\end{figure}

\subsubsection{Disk inclusion}
In the literature, attention has been paid to stability regions that include
the disk 
\begin{align} \label{eq:disk}
D(h) & = \{z : |1+z/h|\le1\},
\end{align}
for the largest possible $h$.
As far as we know, the optimal result for $p=1$ ($\Hopt=s$) was
first proved in \cite{jeltsch1978largest}.
The optimal result for $p=2$ ($\Hopt=s-1$) was first proved
in \cite{vichnevetsky1983}.  Both results have been unwittingly rediscovered
by later authors.  For $p>2$, no exact results are available.

We use the basis
$$ Q_j(z) = \left(1+\frac{z}{h}\right)^j.$$
Note that $Q_j(z)$ is the optimal polynomial for the case
$s=j$, $p=1$.  This basis can also be motivated by recalling that Vandermonde
matrices are perfectly conditioned when the points involved are equally spaced on the
unit circle.  Our basis can be obtained by taking the monomial basis and applying an
affine transformation that shifts the unit circle to the disk \eqref{eq:disk}.
This basis greatly improves the robustness of the algorithm for this particular spectrum.
We show results for $p\le 4$ in Figure \ref{fig:disk}.
For $p=3$ and $s=5,6$, our results give a small improvement over those
of \cite{Jeltsch2006a}.  Some examples of optimal stability regions
are plotted in Figure \ref{fig:diskstab}.

\begin{figure}
  \center
  \includegraphics[width=3in]{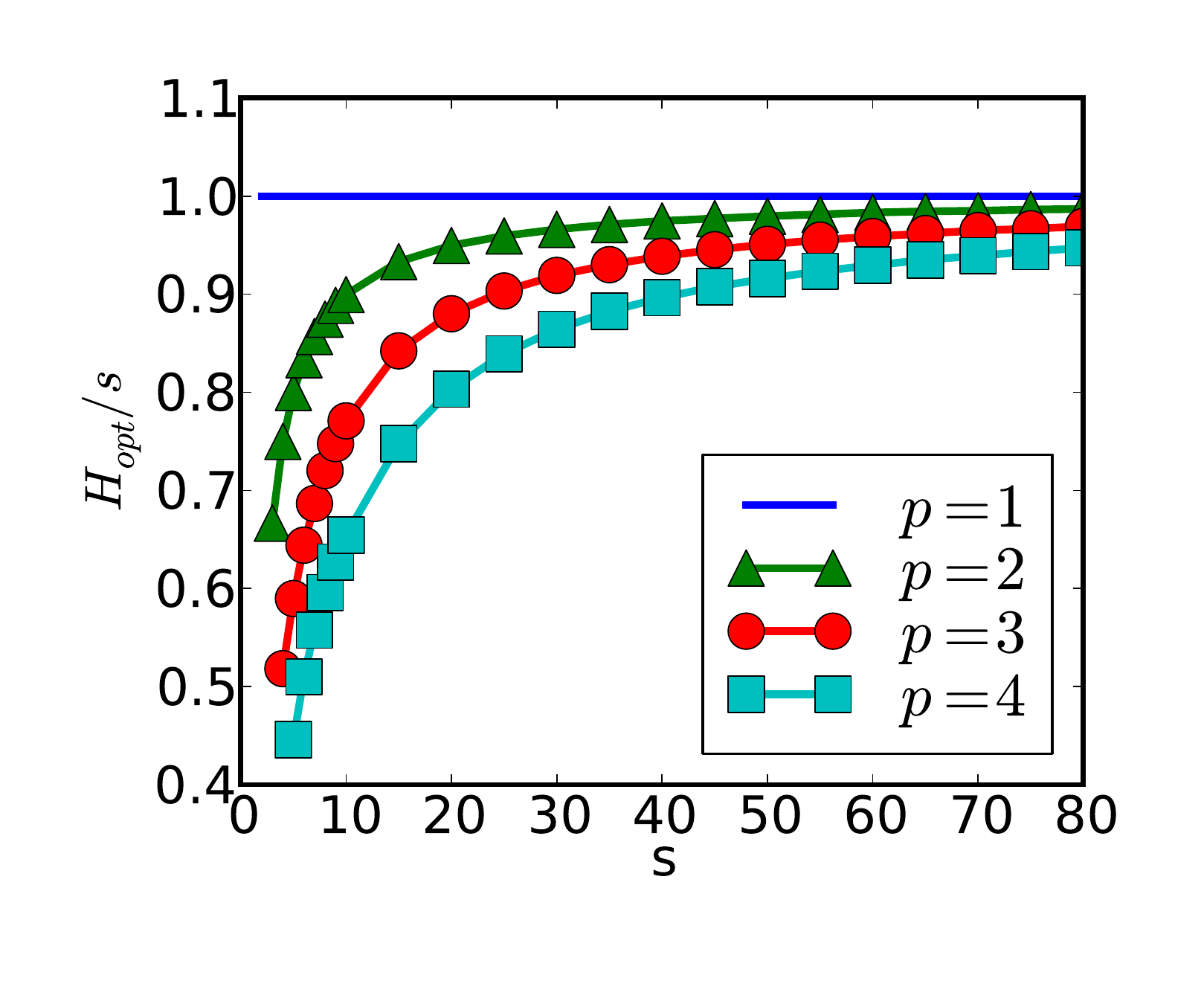}
  \caption{Relative size of largest disk that can be included in the stability region
            (scaled by the number of stages).\label{fig:disk}}
\end{figure}

\begin{figure}
  \center
  \subfigure[$p=3, s=8$]{\includegraphics[width=2.5in]{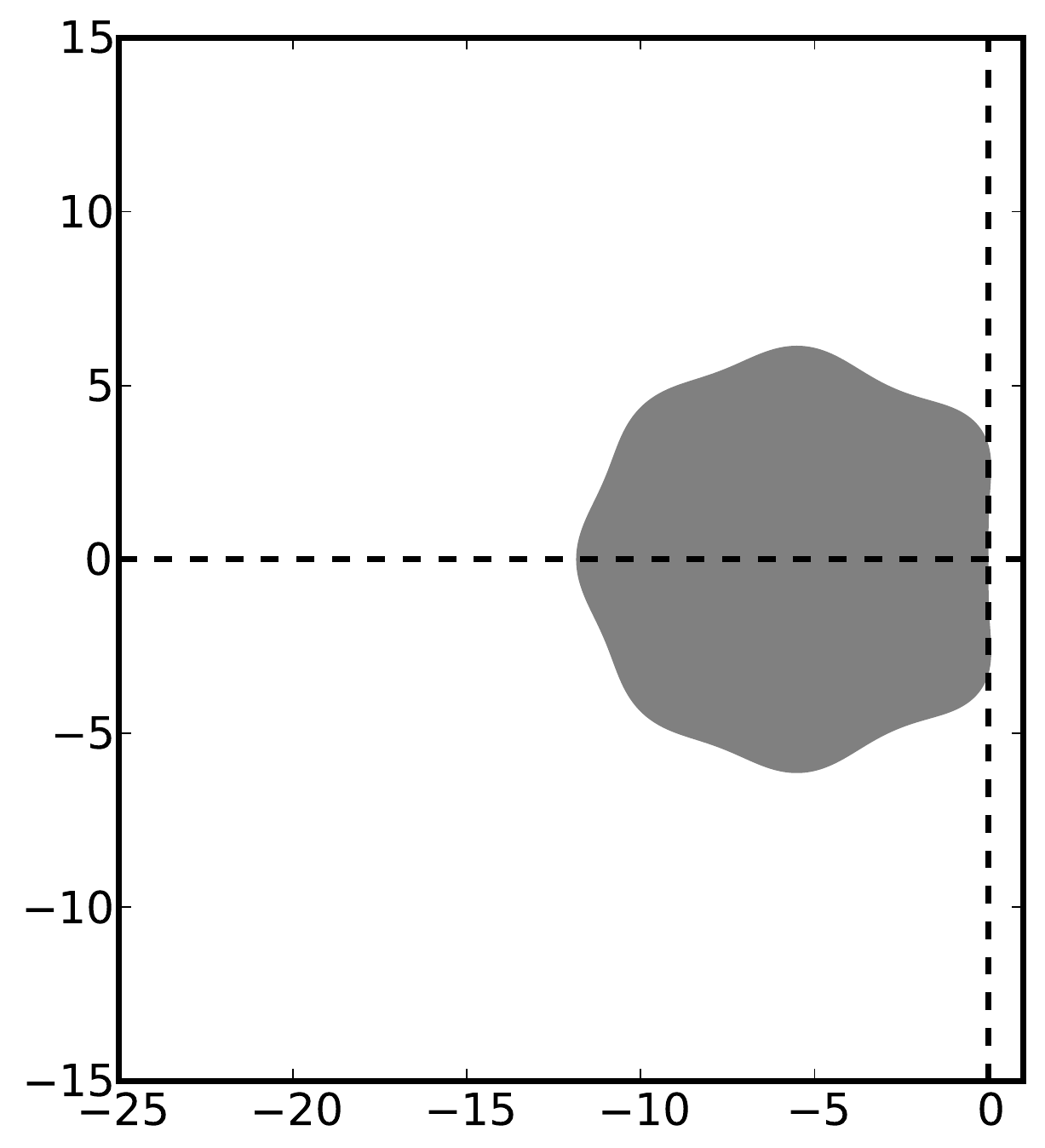}}
  \subfigure[$p=4, s=15$]{\includegraphics[width=2.5in]{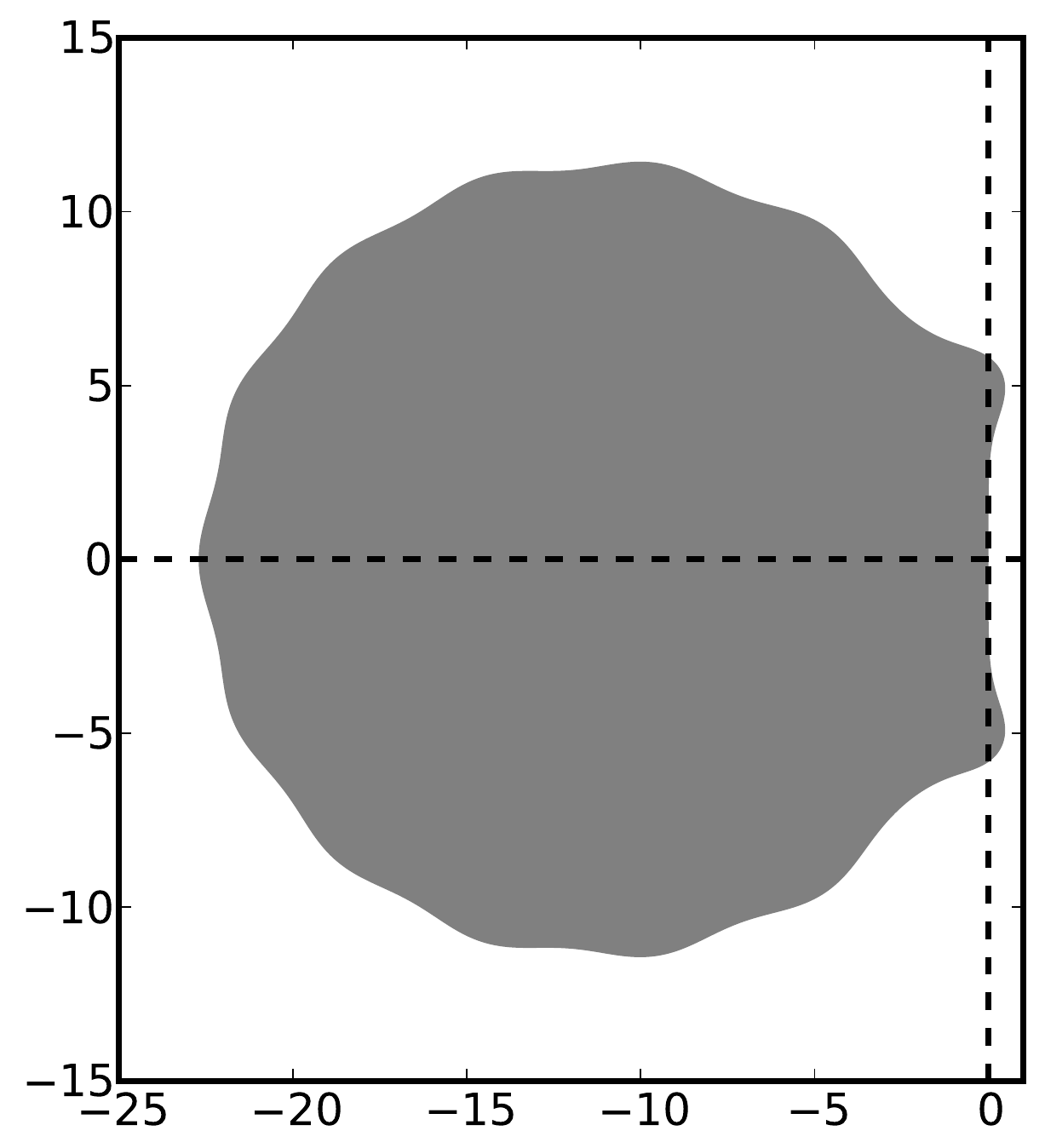}}
  \caption{Stability regions of some optimal methods for disk inclusion.\label{fig:diskstab}}
\end{figure}


%

\subsection{Spectrum with a gap\label{sec:gap}}
We now demonstrate the effectiveness of our method for more general spectra.
First we consider the case of a dissipative problem with two time
scales, one much faster than the other.  This type of problem was
the motivation for the development of projective integrators in \cite{Gear2003a}.
Following the ideas outlined there we consider
\begin{align} \label{eq:gap}
\Lambda & = \{z : |z|=1, \Real(z)\le 0\} \cup \{z : |z-\alpha|=1\}.
\end{align}
We take $\alpha=20$ and use the shifted and scaled Chebyshev basis
\eqref{eq:shifted_cheb}.  Results are shown in Figure \ref{fig:gap}.  A
dramatic increase in efficiency is achieved by adding a few extra stages.  

\begin{figure}
  \center
  \subfigure[Optimal effective step size]{\includegraphics[height=1.8in]{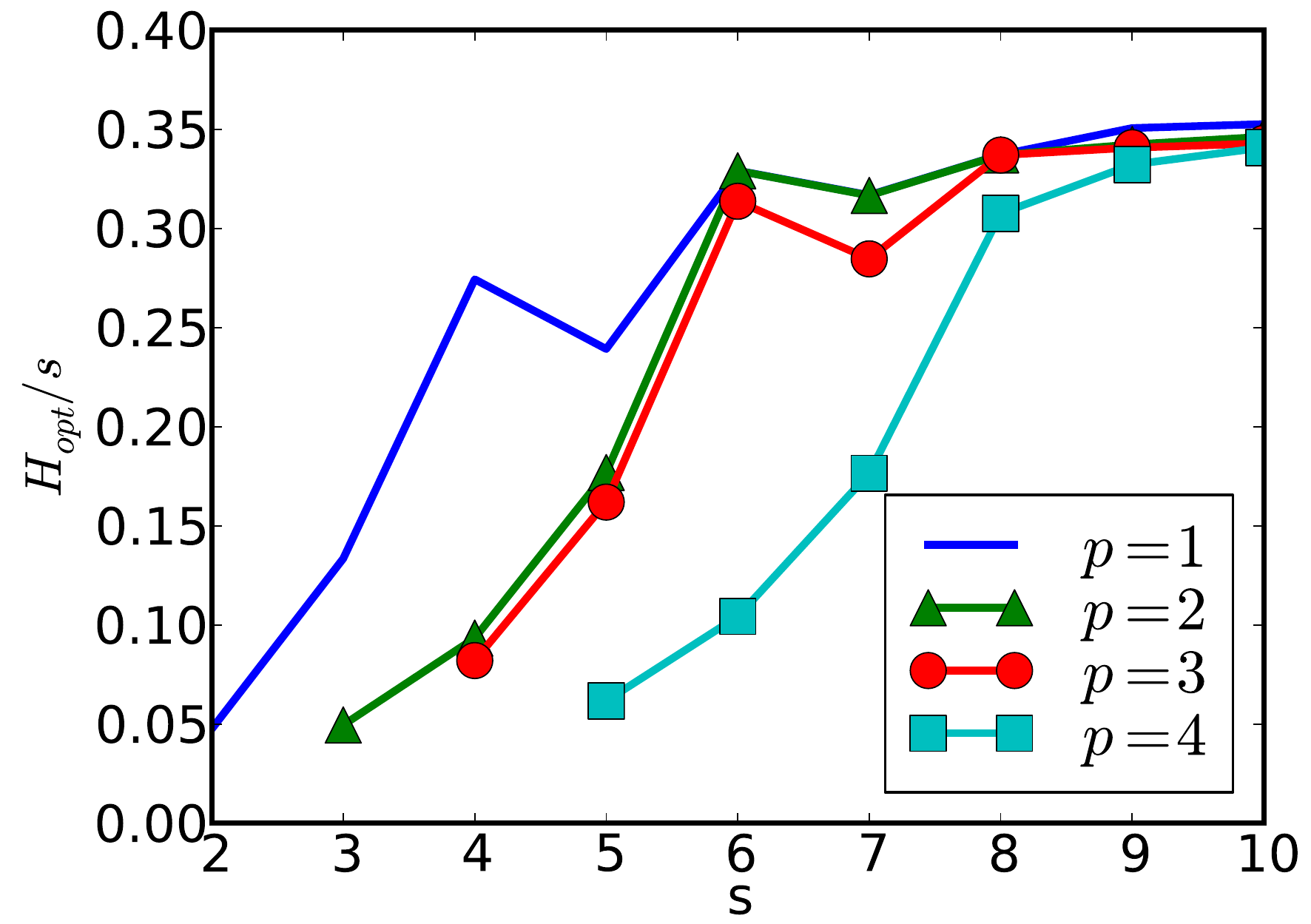}}
  \subfigure[Optimal stability region for $p=1, s=6, \alpha=20$ (stable step size $\approx 1.975$)]{\includegraphics[height=1.8in]{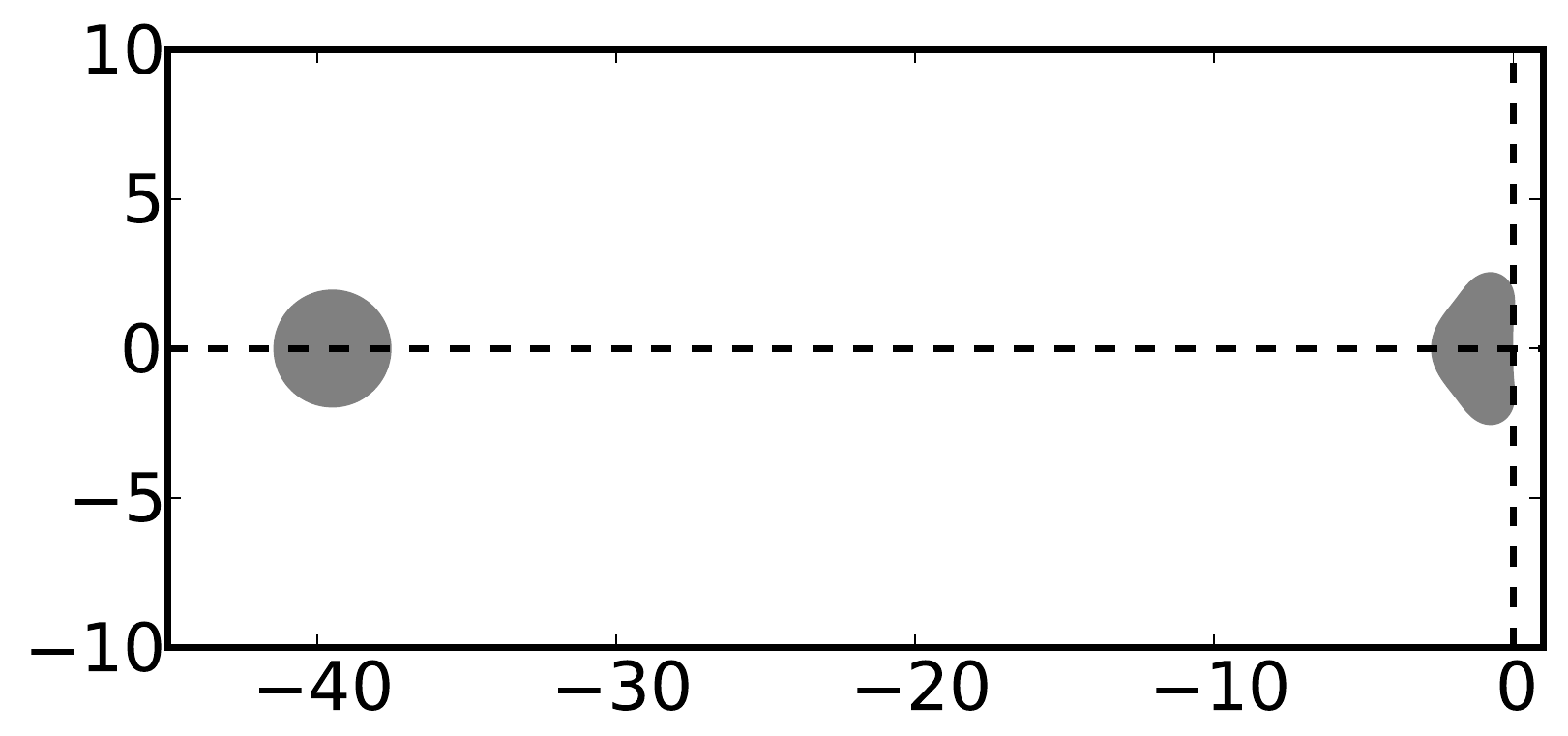}}
  \caption{Optimal methods for spectrum with a gap \eqref{eq:gap} with $\alpha=20$.\label{fig:gap}}
\end{figure}

\subsection{Legendre pseudospectral discretization\label{sec:pseudospectrum}}
Next we consider a system obtained from semidiscretization of the advection equation
on the interval $[-1,1]$ with homogeneous Dirichlet boundary condition:
$$u_t = u_x \ \ \ \ \ \ \ \ u(t,x=1)=0.$$
The semi-discretization is based on pseudospectral collocation at points given
by the zeros of the Legendre polynomials; we take $N=50$ points.  The
semi-discrete system takes the form \eqref{eq:ivp}, where $L$ is the Legendre
differentiation matrix, whose eigenvalues are shown in Figure \ref{fig:eig}.
We compute an optimally stable polynomial based on the spectrum of the
matrix, taking $s=7$ and $p=1$.  The stability region of the resulting method is plotted
in Figure \ref{fig:leg_unstable}.  Using an appropriate step size, all
the scaled eigenvalues of $L$ lie in the stability region.  However, this
method is unstable in practice for any positive step size; Figure \ref{fig:unstable_leg}
shows an example of a computed solution after three steps, where the initial
condition is a Gaussian.  The resulting instability is non-modal, meaning that it does
not correspond to any of the eigenvectors of $L$ (compare \cite[Figure 31.2]{trefethen-pseudospectra}).

This discretization is now well-known
as an example of non-normality \cite[Chapters 30-32]{trefethen-pseudospectra}.
Due to the non-normality, it is necessary to consider pseudospectra in 
order to design an appropriate integration scheme.
The $\epsilon$-pseudospectrum (see \cite{trefethen-pseudospectra}) is the set
$$ \{ z\in\Complex : \|(z-D)^{-1}\|>1/\epsilon\}.  $$
The $\epsilon$-pseudospectrum (for $\epsilon=2$) is shown with the eigenvalues
in Figure \ref{fig:eig-pssp}.
The instability observed above occurs because the stability region does not
contain an interval on the imaginary axis about the origin, whereas the
pseudospectrum includes such an interval.

We now compute an optimally stable integrator based on the 2-pseudospectrum.
This pseudospectrum is computed using an approach proposed in \cite[Section 20]{trefethen1999},
with sampling on a fine grid.  In order to reduce the number of constraints and speed up
the solution, we compute the convex hull of the resulting set
and apply our algorithm.  The resulting stability region is
shown in Figure \ref{fig:leg_stable}.  It is remarkably well adapted;
notice the two isolated roots that ensure stability of the modes corresponding to the
extremal imaginary eigenvalues.  We have verified that this method produces a stable
solution, in agreement with theory (see Chapter 32 of \cite{trefethen-pseudospectra}); Figure \ref{fig:stable_leg}
shows an example of a solution computed with this method.  The initial Gaussian pulse advects
to the left.

\begin{figure}
  \center
  \subfigure[Eigenvalues.\label{fig:eig}]{\includegraphics[width=1.8in]{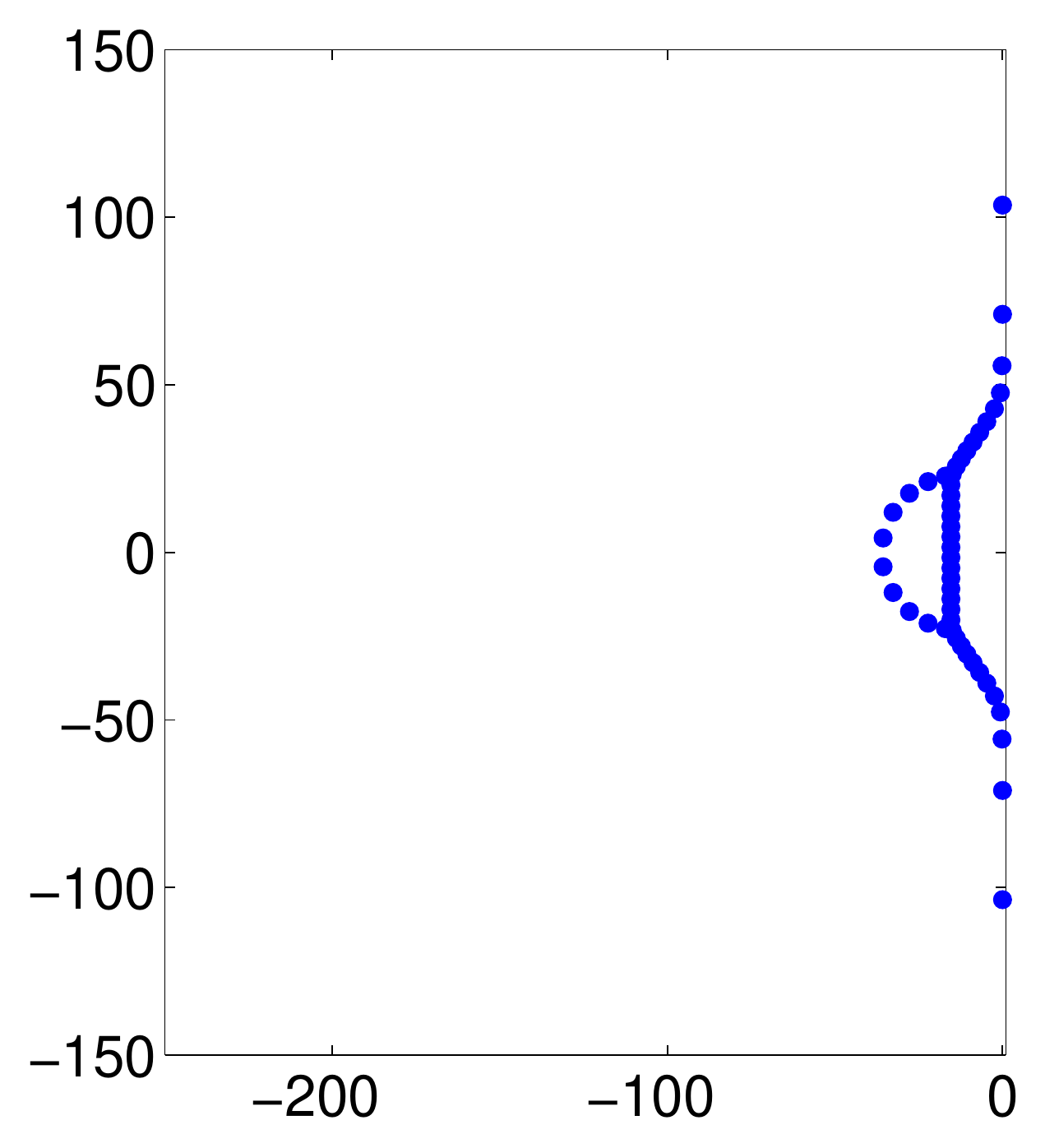}} \ \  \ \ \
  \subfigure[Eigenvalues and pseudospectrum (the boundary of the 2-pseudospectrum is plotted).\label{fig:eig-pssp}]{\includegraphics[width=1.8in]{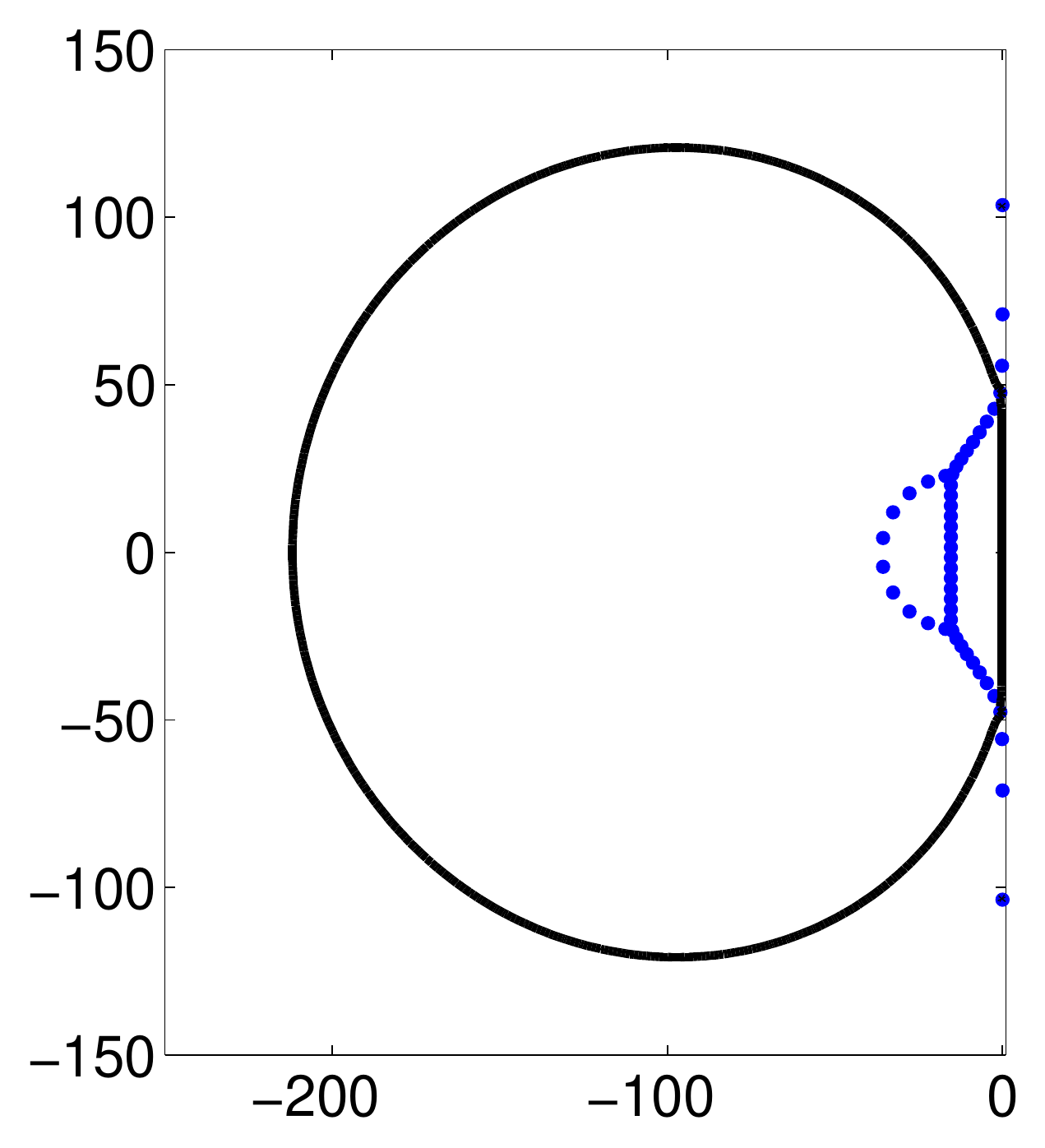}} \\
  \subfigure[Optimized stability region based on eigenvalues.\label{fig:leg_unstable}]{\includegraphics[height=2.in]{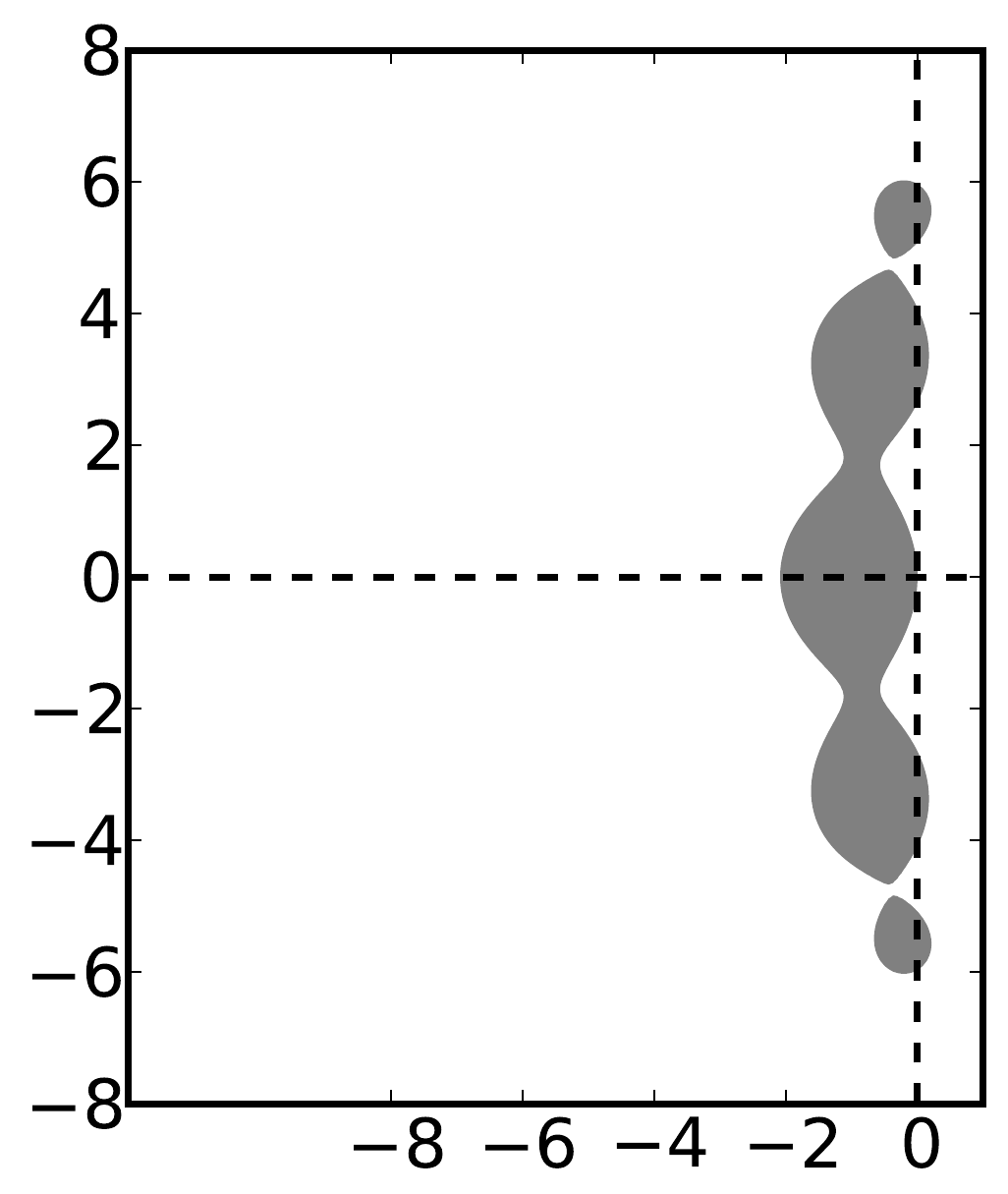}} \ \ \ \ \  \quad 
  \subfigure[Optimized stability region based on pseudospectrum.\label{fig:leg_stable}]{\includegraphics[height=2.in]{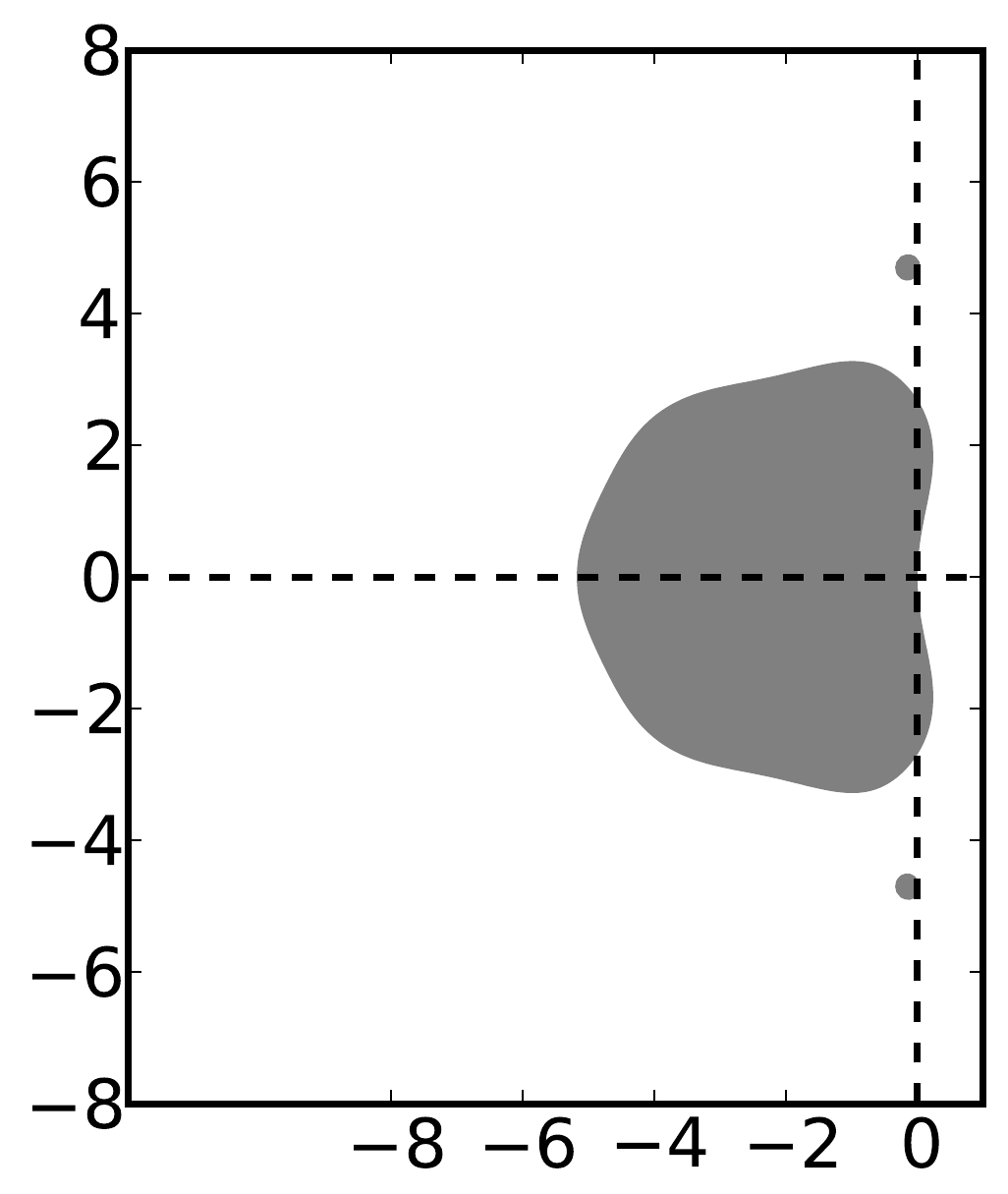}} \\
  \subfigure[Solution computed with method based on spectrum.\label{fig:unstable_leg}]{\includegraphics[height=1.5in]{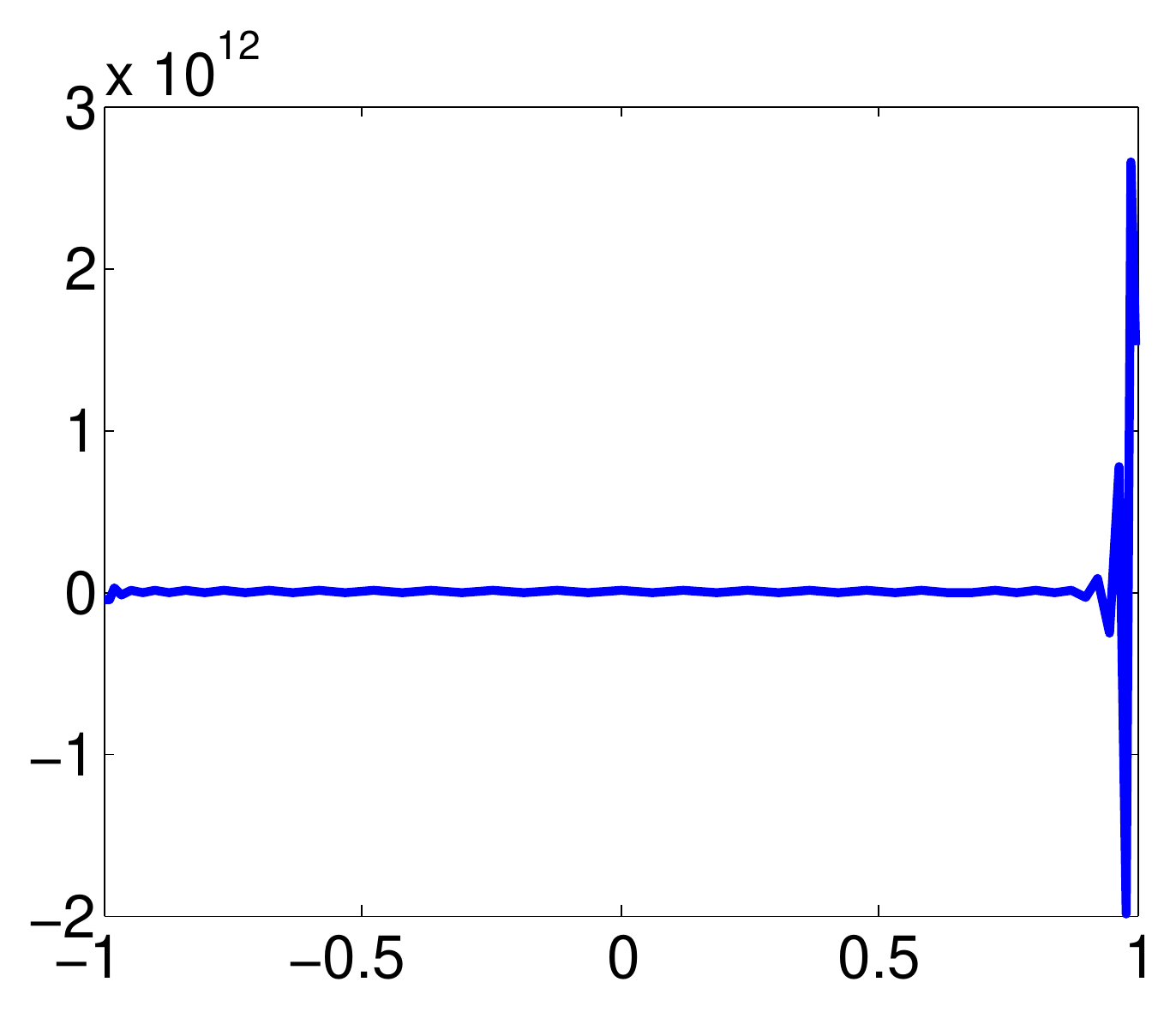}} \ \ \ \ \  \quad 
  \subfigure[Solution computed with method based on pseudospectrum.\label{fig:stable_leg}]{\includegraphics[height=1.5in]{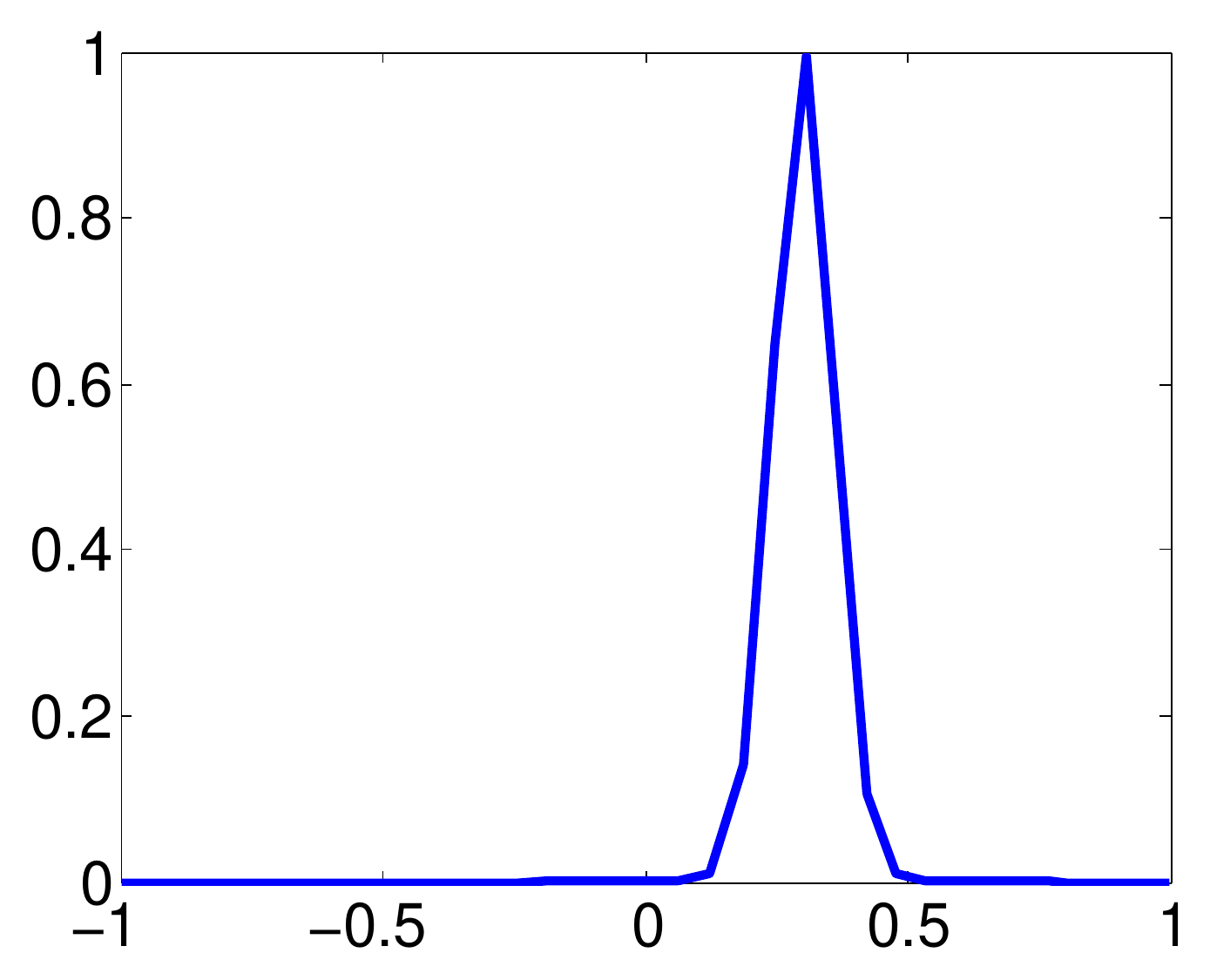}} \\
  \caption{Results for the Legendre differentiation matrix with $N=50$.\label{fig:legendre}}
\end{figure}

\subsection{Thin rectangles\label{sec:rectangles}}
A major application of explicit Runge--Kutta methods with many stages is 
the solution of moderately stiff advection-reaction-diffusion problems
\cite{Hundsdorfer_Verwer_2003,verwer2004rkc}.
For such problems, the stability region must include not only a large interval on the
negative real axis, but also some region around it, due to convective terms.
If centered differences are used for the advective terms, it is natural to require
that a small interval on the imaginary axis be included.  Hence, one may be interested
in methods that contain a rectangular region
\begin{align}
\Lambda_\kappa = \{\lambda\in\Complex : -\beta\le\imag(\lambda)\le\beta, \ \
-\kappa\le \real(\lambda)\le 0\}.
\end{align}
for given $\kappa, \beta$.  No methods optimized for such regions appear in 
the literature, and the available approaches for devising methods with extended real
axis stability (including those of \cite{torrilhon2007essentially}) cannot
be applied to such regions.  Because of this, previously
existing methods are applicable only if upwind differencing is applied to convective terms
\cite{verwer2004rkc,torrilhon2007essentially}.  

For this example, rather than parameterizing by the step size $h$, we assume that
a desired step size $h$ and imaginary axis limit $\beta$ are given based on the
convective terms, which generally require small step sizes for accurate
resolution.  We seek to find (for given $s,p$) the polynomial
\eqref{eq:polyform} that includes $\Lambda_\kappa$ for $\kappa$ as large as possible.
This could correspond to selection of an optimal integrator based on the
ratio of convective and diffusive scales (roughly speaking, the Reynolds number).
Since the desired stability region lies relatively near the negative real axis,
we use the shifted and scaled Chebyshev basis \eqref{eq:shifted_cheb}.

Stability regions of some optimal methods are shown in Figure \ref{fig:rectstab}.
The outline of the included rectangle is superimposed in black.
The stability region for $\beta=10, s=20$, shown in Figure \ref{fig:rectstab} is
especially interesting as it is very nearly rectangular.  A closeup view of the
upper boundary is shown in Figure \ref{fig:closeup}.

\begin{figure}
  \center
  \subfigure[$\beta=1$, $p=1, s=10$]{\includegraphics[width=2.5in]{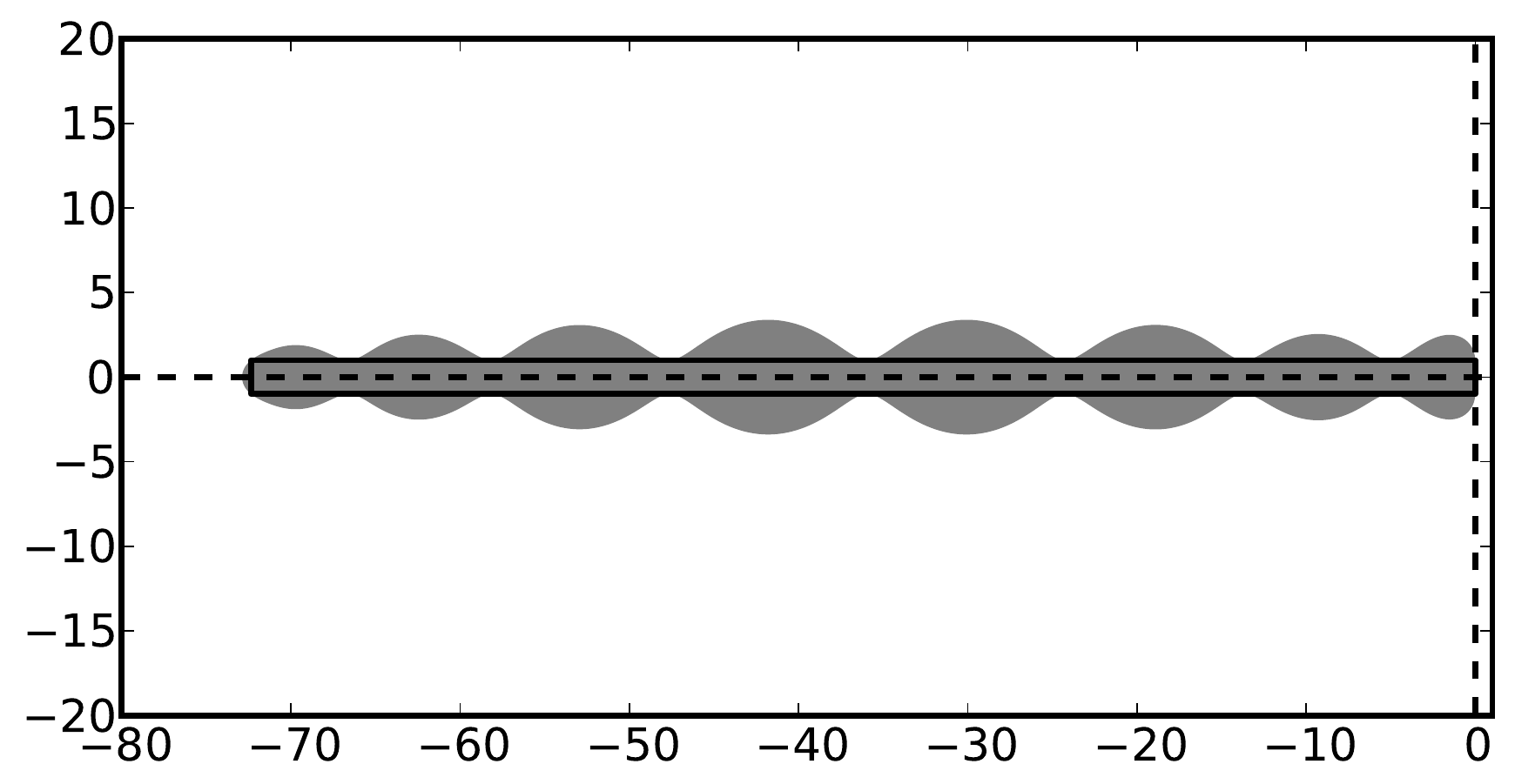}}
  \subfigure[$\beta=10$, $p=1, s=20$\label{fig:rectrect}]{\includegraphics[width=2.5in]{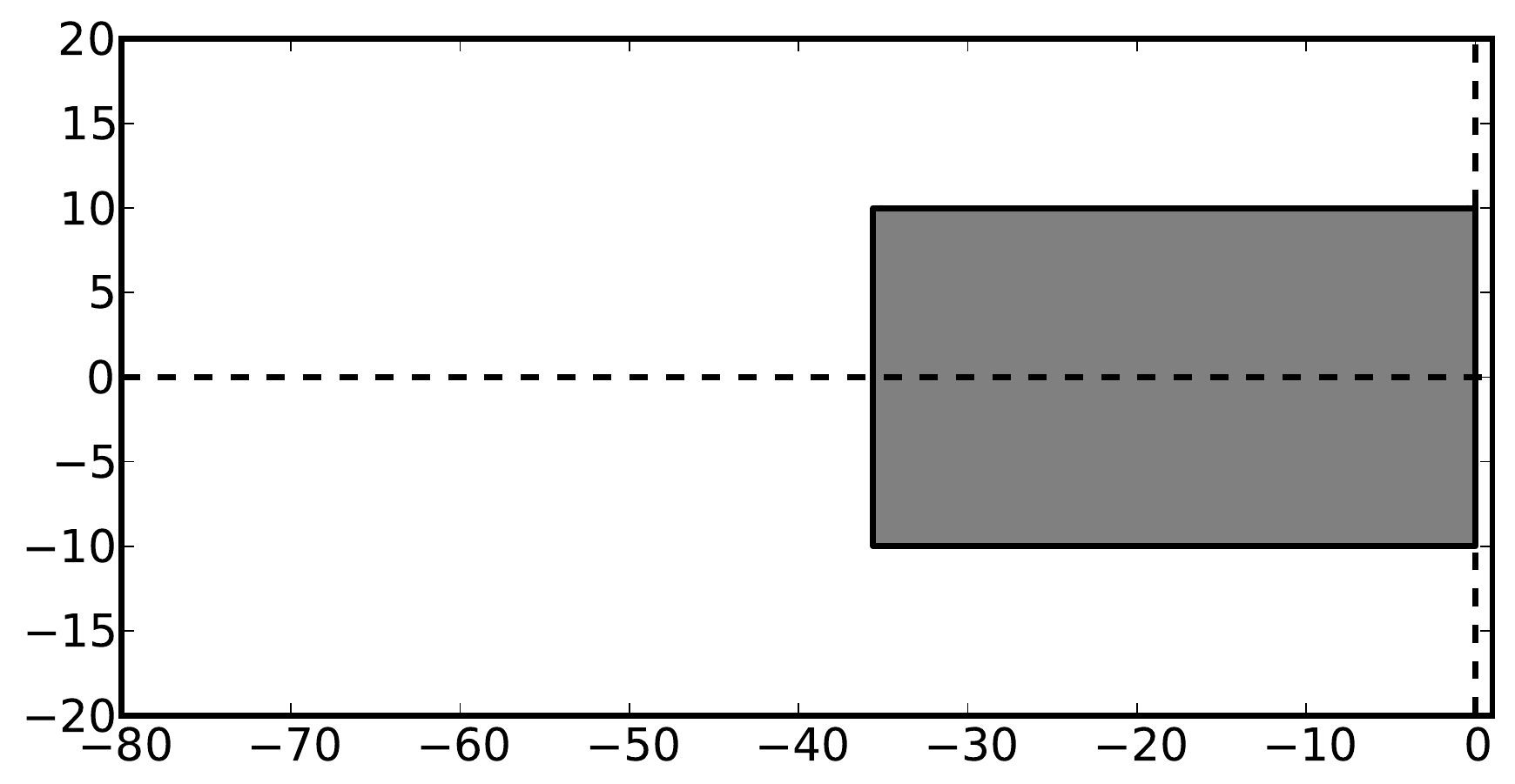}}
  \caption{Stability regions of some optimal methods for thin rectangle inclusion.\label{fig:rectstab}}
\end{figure}

\begin{figure}
  \center
  \includegraphics[width=2.5in]{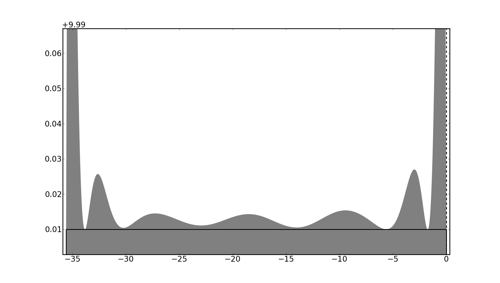}
  \caption{Closeup view of upper boundary of the rectangular stability region plotted in Figure \ref{fig:rectstab}.\label{fig:closeup}}
\end{figure}

%
%
%
%

\section{Discussion}
The approach described here can speed up the integration of IVPs for which
\begin{itemize}
    \item explicit Runge--Kutta methods are appropriate;
    \item the spectrum of the problem is known or can be approximated; and
    \item stability is the limiting factor in choosing the step size.
\end{itemize}
Although we have considered only linear initial value problems,
we expect our approach to be useful in designing integrators for nonlinear
problems via the usual approach of considering the spectrum of the Jacobian.
A first successful application of our approach to nonlinear PDEs appears in
\cite{parsani2012}.

The amount of speedup depends strongly on the spectrum of the problem,
and can range from a few percent to several times or more.
Based on past work and on results presented in Section \ref{sec:examples}, we expect that
the most substantial gains in efficiency will be realized for systems whose
spectra have large negative real parts, such as for semi-discretization of
PDEs with significant diffusive or moderately stiff reaction components.
As demonstrated in Section \ref{sec:examples}, worthwhile improvements may also be attained for general systems,
and especially for systems whose spectrum contains gaps.  

The work presented here suggests several extensions and areas for further
study.
For very high polynomial degree, the convex subproblems required by our
algorithm exhibit poor numerical conditioning.  We have proposed a first
improvement by change of basis, but further improvements in this regard
could increase the robustness and accuracy of the algorithm.
It seems likely that our algorithm exhibits global convergence in
general circumstances beyond those for which we have proven convergence.
The question of why bisection seems to always lead to globally optimal
solutions merits further investigation.
While we have focused primarily on design of the stability properties of a scheme,
the same approach can be used to optimize accuracy efficiency,
which is a focus of future work.
Our algorithm can also be applied in other ways;  for instance, it could be used
to impose a specific desired amount of dissipation for use in multigrid
or as a kind of filtering.


We remark that the problem of determining optimal polynomials subject
to convex constraints is very general.  Convex optimization techniques have already
been exploited to solve similar problems in filter design \cite{Davidson2010},
and will likely find further applications in numerical analysis.

{\bf Acknowledgments.}  We thank Lajos Loczi for providing a simplification
of the proof of Lemma \ref{lem:lipschitz}.  We are grateful to R.J. LeVeque
and L.N. Trefethen for helpful comments on a draft of this work.

\bibliography{stabregopt}

\end{document}